\newtheorem{lemma}{Lemma}[section]
\newtheorem{theorem}[lemma]{Theorem}
\newtheorem{remark}[lemma]{Remark}
\newtheorem{proposition}[lemma]{Proposition}
\numberwithin{equation}{section}
\title{\textsf{Cohomology of $\frak{q}(2)$ in prime characteristic}\footnote{This work is supported by Heilongjiang Provincial Natural Science Foundation of China (YQ2020A005) and the Natural Science Foundation  of China (12061029).}}
\author{\textsc{Shujuan Wang$^{1}$, \textsc{Yang Liu$^{2}$} and
  \textsc{Wende Liu$^{3,}$}}\footnote{Correspondence:  wendeliu@ustc.edu.cn (W. Liu)}\\
  {\small \textit{$^1$Department of Mathematics, Shanghai Maritime University,}}\\
\small \textit{Shanghai  201306, China} \\
\small \textit{$^2$School of Mathematics, Harbin Institute of Technology,}\\
\small \textit{Harbin 150001, China} \\
 \small\textit{$^3$School of Mathematics and Statistics,}
\textit{Hainan Normal University,}\\ \small\textit{ Haikou 571158,  China} }
\date{ }
\begin{document}
\maketitle
\begin{quotation}
\small\noindent \textbf{Abstract}:
Over an algebraically closed field of characteristic $p>2$,
 the 0-dimensional and 1-dimensional cohomology of  the queer Lie superalgebra $\frak{q}(2)$ with
coefficients in all  baby Verma modules and all the simple modules are determined.

 \vspace{0.2cm} \noindent{\textbf{Keywords}}: Queer Lie superalgebras, baby Verma modules, simple modules, cohomology.

\vspace{0.1cm} \noindent \textbf{Mathematics Subject Classification
2020}: 17B40, 17B50, 17B56

\end{quotation}
\setcounter{section}{0}
\section{Introduction}
 Lie superalgebra cohomology is of great importance for studying extensions of
modules as well as extensions of Lie superalgebras themselves.
For instance, relative
cohomology is fundamental in the Borel-Weil-Bott theory (see \cite{BWB}) and cohomology of
nilpotent radicals of parabolic subalgebras is crucial in the Kazhdan-Lusztig theory (see \cite{KL}).
In 1977, Kac posed a challenging question: determining low-dimensional
cohomology of simple Lie superalgebras with coefficients in arbitrary simple modules over a field of
characteristic 0.
For the simple Lie superalgebra $\frak{sl}(m,n)$ or $\frak{osp}(2,2n)$,
Schunert, Su and Zhang have answered  Kac's question in \cite[p. 5052]{schunertzhang} and \cite[Theorems 1.2 and 1.3]{suzhang}.

Our concern is the modular-version of Kac's question above.
This paper is a sequel to \cite{sl(21)}, in which $\mathrm{H}^1\left(\frak{sl}(2,1), M\right)$
is determined for any finite-dimensional simple $\frak{sl}(2,1)$-module $M$ over a field of prime characteristic.
This paper aims to determine the 0-dimensional and 1-dimensional cohomology of  the smallest queer Lie superalgebra $\frak{q}(2)$ with
coefficients in all  baby Verma modules and all simple modules
over an algebraically closed field of characteristic $p>2$.
Our main results are the following two theorems.
\begin{theorem}\label{Th1}
Over an algebraically closed field of characteristic $p>2$, let $ Z_{\chi}(\lambda)$ be the baby Verma module of $q(2)$ with   highest weight $\lambda$ and $p$-character $\chi$.
 Then
\begin{align}
\mathrm{sdim}\; \mathrm{H}^0(q(2),  Z_{\chi}(\lambda))&=\left\{\begin{array}{lll}
0\mid1, &(\lambda,\chi)=(0,0)\\
0\mid0,\;\;&\mbox{otherwise},
\end{array}\right.\label{0Z}\\
\mathrm{sdim}\; \mathrm{H}^1(q(2),  Z_{\chi}(\lambda))&=\left\{\begin{array}{lll}
1\mid1, &(\lambda,\chi)=(0,0)\\
0\mid0,\;\;&\mbox{otherwise}.
\end{array}\right.\label{1Z}
\end{align}
\end{theorem}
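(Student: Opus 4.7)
The plan is to compute $H^0$ and $H^1$ directly from the Chevalley--Eilenberg cocycle description of Lie superalgebra cohomology, using an explicit PBW basis of the baby Verma module $Z_{\chi}(\lambda)$ together with the weight grading induced by the even Cartan.

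First I would fix a triangular decomposition $\mathfrak{q}(2)=\mathfrak{n}^-\oplus\mathfrak{h}\oplus\mathfrak{n}^+$ with super Cartan $\mathfrak{h}=\mathfrak{h}_{\bar 0}\oplus\mathfrak{h}_{\bar 1}$, and realize $Z_{\chi}(\lambda)=U_{\chi}(\mathfrak{q}(2))\otimes_{U_{\chi}(\mathfrak{b})}V_\lambda$ for the Clifford-type simple $\mathfrak{h}$-module $V_\lambda$ of highest weight $\lambda$. A PBW basis of $Z_{\chi}(\lambda)$ over $V_\lambda$ indexed by ordered monomials in $\mathfrak{n}^-$ carries a finite $\mathfrak{h}_{\bar 0}$-weight decomposition whose maximal and minimal weights, together with the dimension of each weight space, are controlled by $\lambda$ and the $p$-character $\chi$. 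This finite combinatorial picture will drive all subsequent calculations.

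For $H^0$ I would extract invariants by weight analysis. Any $v\in Z_{\chi}(\lambda)^{\mathfrak{q}(2)}$ must sit in the zero $\mathfrak{h}_{\bar 0}$-weight space and be annihilated by $\mathfrak{n}^\pm$ and by $\mathfrak{h}_{\bar 1}$. Parametrizing that weight space via the PBW basis and imposing these equations yields a linear system whose coefficients depend on $\chi$ through the restricted relations $x^p-x^{[p]}=\chi(x)^p$. I expect the system to be inconsistent unless $\lambda=0$ and $\chi=0$, and in that case to admit a one-dimensional solution space spanned by a vector of odd parity (an element arising from the bottom weight layer through the action of $\mathfrak{h}_{\bar 1}$), matching $\mathrm{sdim}\,H^0=0\mid 1$.

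For $H^1$ I would take a cocycle $\phi\in Z^1(\mathfrak{q}(2),Z_{\chi}(\lambda))$, decompose it by $\mathfrak{h}_{\bar 0}$-weight, and use coboundaries $x\mapsto x\cdot m$ to eliminate all components of nonzero weight, reducing to the case when $\phi$ has $\mathfrak{h}_{\bar 0}$-weight $0$. The cocycle identity applied to brackets $[h,e_\alpha]$ and $[e_\alpha,e_{-\alpha}]$ then expresses $\phi|_{\mathfrak{n}^\pm}$ in terms of $\phi|_{\mathfrak{h}}$, whose values lie in the zero weight space already catalogued in the $H^0$ step. A final consistency check on the remaining cocycle relations, together with the quotient by inner derivations $m\mapsto x\cdot m$ for $m$ in the zero weight space, should collapse the cohomology to $0$ except when $(\lambda,\chi)=(0,0)$, where two independent classes survive, one even and one odd, giving $\mathrm{sdim}\,H^1=1\mid 1$.

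The main obstacle will be handling the nontrivial odd Cartan $\mathfrak{h}_{\bar 1}$, a feature absent in the $\mathfrak{sl}(2,1)$ prequel: it produces Clifford-type doublings on each weight space and couples even and odd cocycle components through the constraints $[\mathfrak{h}_{\bar 1},\mathfrak{h}_{\bar 1}]\subset\mathfrak{h}_{\bar 0}$. The delicate point is therefore not whether cohomology vanishes but the exact parity split, i.e. obtaining $0\mid 1$ and $1\mid 1$ rather than $1\mid 0$ or $2\mid 0$. I expect this will force an explicit case analysis in which the action of a fixed basis of $\mathfrak{h}_{\bar 1}$ on the candidate invariants and on $\phi|_{\mathfrak{h}}$ is computed directly, and the parities of the surviving classes read off from that computation.
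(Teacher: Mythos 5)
Your strategy is essentially the paper's own: reduce to weight-zero cocycles modulo inner derivations (the paper's Lemma 2.1), catalogue the relevant weight spaces of $Z_{\chi}(\lambda)$ from its explicit basis, and pin down the cocycle on $\mathfrak{h}$ and $\mathfrak{n}^{\pm}$ via the cocycle identities on brackets such as $[H_i,H_i]=2h_i$, $[H_i,F]$, $[e,H_i]$ and $[e,f]$, with the odd Cartan producing exactly the parity splits you anticipate. The plan is sound and the predicted answers, including the two surviving classes (one even, one odd) at $(\lambda,\chi)=(0,0)$, match the paper's; what remains is only the deferred case-by-case linear algebra over the nilpotent, semisimple and mixed $p$-characters and the various shapes of $\lambda$, which the paper carries out explicitly.
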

\begin{theorem}\label{Th2}
Over an algebraically closed field of characteristic $p>2$, let $L_{\chi}(\lambda)$ be the simple module of $q(2)$ with   highest weight $\lambda$ and $p$-character $\chi$.
 Then
\begin{align*}
\mathrm{sdim}\; \mathrm{H}^1(q(2),L_{\chi}(\lambda))&=\left\{\begin{array}{ll}
2\mid 2, &(\lambda,\chi)=(0,0)\\
0\mid 1, &\lambda=(1,p-1),\chi=0\\
 2\mid 0, &\lambda=(p-1,1),\chi=0\\
0\mid 0, & \mbox{otherwise}.
\end{array}\right.
\end{align*}
\end{theorem}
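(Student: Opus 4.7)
The plan is to obtain $\mathrm{H}^1(q(2), L_\chi(\lambda))$ by comparing the simple modules with the baby Verma modules covered by Theorem~\ref{Th1}, and to treat the trivial module as a separate direct computation.

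First I would determine the pairs $(\lambda, \chi)$ for which $Z_\chi(\lambda)$ is already simple, so that $L_\chi(\lambda) \cong Z_\chi(\lambda)$ and Theorem~\ref{Th1} applies directly. For $q(2)$ this should reduce the non-trivial work to a small set of exceptional pairs, all with $\chi = 0$ and $\lambda = (\lambda_1, \lambda_2)$ atypical in the queer sense, i.e.\ $\lambda_1 + \lambda_2 \equiv 0 \bmod p$. For every pair outside this set, equation \eqref{1Z} gives $\mathrm{sdim}\, \mathrm{H}^1 = 0\mid 0$, which accounts for the ``otherwise'' row of Theorem~\ref{Th2}.

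For each atypical weight I would work out the composition factors of $Z_0(\lambda)$ and fit $L_0(\lambda)$ into a short exact sequence
\begin{equation*}
0 \longrightarrow N \longrightarrow Z_0(\lambda) \longrightarrow L_0(\lambda) \longrightarrow 0,
\end{equation*}
where $N$ has a filtration by simples $L_0(\mu)$ for nearby atypical weights $\mu$. The associated long exact sequence in cohomology, combined with $\mathrm{H}^0$ and $\mathrm{H}^1$ of the relevant baby Verma modules from Theorem~\ref{Th1}, then yields $\mathrm{H}^1(q(2), L_0(\lambda))$ up to the connecting homomorphism. In parallel, I would handle the trivial module $L_0(0)$ directly by computing super-derivations of $q(2)$ modulo inner derivations from the explicit $2\times 2$ supermatrix realization of $q(2)$; this identifies $\mathrm{H}^1(q(2), L_0(0))$ with the linear dual of $q(2)/[q(2), q(2)]$ as a super vector space and should yield the expected $2\mid 2$. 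The $\mathrm{H}^0$ computation is immediate: $L_\chi(\lambda)^{q(2)}$ vanishes except for $(\lambda,\chi)=(0,0)$, and this will feed cleanly into each long exact sequence.

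The main obstacle I expect is the bookkeeping of parity. The dimensions $0\mid 1$ at $\lambda = (1, p-1)$ and $2\mid 0$ at $\lambda = (p-1, 1)$ are not symmetric, so the parity shift between the head and the socle of the corresponding baby Verma modules must be tracked carefully: this asymmetry forces the connecting homomorphism to land in opposite components in the two cases. A secondary difficulty is ruling out contributions from atypical weights outside the three listed, which will come down to showing that the connecting map is an isomorphism on the relevant component in those remaining cases, leaving $\mathrm{H}^1(q(2), L_0(\lambda)) = 0$.
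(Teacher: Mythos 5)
Your plan rests on the long exact sequence attached to $0\to N\to Z_{0}(\lambda)\to L_{0}(\lambda)\to 0$, and this is where the argument fails to close. The relevant segment is $\mathrm{H}^1(\frak{g},Z_{0}(\lambda))\to \mathrm{H}^1(\frak{g},L_{0}(\lambda))\to \mathrm{H}^2(\frak{g},N)$, and by \eqref{1Z} the first term vanishes for every $\lambda\neq 0$. Hence for $\lambda=(1,p-1)$ and $\lambda=(p-1,1)$ the \emph{entire} space $\mathrm{H}^1(\frak{g},L_{0}(\lambda))$ injects into $\mathrm{H}^2(\frak{g},N)$: the nonzero answers $0\mid 1$ and $2\mid 0$ live wholly in the degree-two part of the sequence, about which Theorem \ref{Th1} says nothing. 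This is not a matter of ``tracking the connecting homomorphism''; you would have to compute second cohomology of the maximal submodules, which is nowhere available. The same obstruction appears in parts of the ``otherwise'' row where $Z_{\chi}(\lambda)$ is not simple (for instance nilpotent $\chi$ with $\lambda=(\lambda_1,-\lambda_1)$, $\lambda_1\neq 0$, where $L_{\chi}(\lambda)=Z_{\chi}(\lambda)/M_2$ is a proper quotient, and $\chi=0$ with $\lambda_1\neq 0,\pm1$): proving $\mathrm{H}^1(\frak{g},L)=0$ there still requires controlling $\mathrm{H}^2(\frak{g},N)\to\mathrm{H}^2(\frak{g},Z)$. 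The paper avoids all of this by describing $L_{\chi}(\lambda)$ explicitly as a quotient with a named basis (Proposition \ref{200822839}), computing its three target-weight spaces (Lemmas \ref{weightspace} and \ref{weightspacel}), reducing to weight-derivations via Lemma \ref{reduction} and Lemma \ref{1649}, and then evaluating $\mathrm{Der}(\frak{g},L_{\chi}(\lambda))_{(0)}$ directly. Some such direct computation on $L_{\chi}(\lambda)$ itself is unavoidable with only $\mathrm{H}^0$ and $\mathrm{H}^1$ of baby Verma modules in hand.

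The treatment of the trivial module also does not deliver what you claim. The identification $\mathrm{H}^1(\frak{g},\mathbb{F})\cong(\frak{g}/[\frak{g},\frak{g}])^*$ is correct, but compute the abelianization: $[H_i,H_i]=2h_i$ and $[h_1,e]=e$, $[h_1,f]=-f$ put all of $\frak{g}_{\bar{0}}$ into $[\frak{g},\frak{g}]$, while $[h_1,E]=E$, $[h_1,F]=-F$ and $[e,F]=H_1-H_2$ put $E$, $F$, $H_1-H_2$ there as well. So $\frak{g}/[\frak{g},\frak{g}]$ is the $0\mid 1$-dimensional span of the image of $H_1+H_2$, and your method yields $\mathrm{sdim}\,\mathrm{H}^1(\frak{g},L_0(0))=0\mid 1$, not $2\mid 2$. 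Note in particular that any $1$-cocycle $\varphi$ with values in the trivial module satisfies $2\varphi(h_i)=\varphi([H_i,H_i])=\pm 2H_i\varphi(H_i)=0$, so no class supported on $h_1,h_2$ can survive. You must either find an error in this elementary computation or confront the resulting conflict with the first row of Theorem \ref{Th2} (and with the paper's claimed cocycles $\varphi_1,\varphi_2$, which fail exactly this identity); as written, this part of the proposal does not establish the stated value.
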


\section{Preliminaries}
In this paper,  we write $\mathbb{F}$ for the underlying field,
 $\mathbb{F}_p$ for the prime subfield of $\mathbb{F}$.
 All vector spaces, algebras  and (sub)modules are assumed to be $\mathbb{Z}_2$-graded and finite-dimensional over $\mathbb{F}$.
Hereafter $\mathbb{Z}_2=\{\bar{0}, \bar{1}\}$ is the field of two elements.
 We make some convention for a vector (super)space $V=V_{\bar{0}}\oplus V_{\bar{1}}$:
 \begin{itemize}
 \item[(1)]
 For $v\in V_{\bar{0}}\cup V_{\bar{1}}$, write $|v|\in\mathbb{Z}_2$ for the parity ($\mathbb{Z}_2$-degree) of $v$ and
 the symbol $|a|$ always implies that $a$ is $\mathbb{Z}_2$-homogeneous in a vector space.
 \item[(2)]
Write $\mathrm{sdim}\;V=\dim V_{\bar{0}}\mid\dim V_{\bar{1}}$.
  \item[(3)]
Write
$V=\langle v_1, \ldots, v_m\mid w_1, \ldots, w_n\rangle,$
meaning that $\{v_1, \ldots, v_m\mid w_1, \ldots, w_n\}$ is a $\mathbb{Z}_2$-homogeneous basis of $V$.
In case $m=0$ or  $n=0$, write
$V=\langle0\mid w_1, \ldots, w_n\rangle$ or $\langle v_1, \ldots, v_m\mid 0\rangle$, respectively.
 \end{itemize}
\subsection{The low-dimensional cohomology of a Lie superalgebra}
Let $L$ be a Lie superalgebra and $M$ an $L$-module.
Recall that a $\mathbb{Z}_{2}$-homogeneous linear mapping $\varphi: L\longrightarrow M$
is a \textit{derivation of parity $|\varphi|$},  if
\begin{eqnarray*}\label{low00}
\varphi([x,y])=(-1)^{|\varphi| |x|}x \varphi(y)-(-1)^{|y|
(|\varphi|+|x|)}y \varphi(x)\; \mbox{for}\; x, y\in L.
\end{eqnarray*}
Denote by $\mathrm{Der}(L,M)$ the vector space spanned by all the $\mathbb{Z}_{2}$-homogeneous derivations from $L$ to $M$,
 each element in which is called a \textit{derivation}.
 For a $\mathbb{Z}_{2}$-homogeneous element $m\in M$,   the map $\frak{D}_m$ from $L$ to $M$ is defined by
 $$\frak{D}_m(x)=(-1)^{|x||m|}x m,\; \mbox{where}\; x\in L.$$
 Then $\frak{D}_m$ is a $\mathbb{Z}_{2}$-homogeneous derivation of parity $|m|$.
 Write $\mathrm{Ider}(L,M)$ for the vector space spanned by all $\frak{D}_m$ with $\mathbb{Z}_{2}$-homogeneous elements  $m\in M$,
 each element in which is called an \textit{inner derivation}.
 In general,  $L$-module $\mathrm{Hom}_{\mathbb{F}}(L,M)$ (consisting of all linear maps from $L$  to $M$) contains
 $\mathrm{Ider}(L,M)$ and $\mathrm{Der}(L,M)$ as submodules.

 Let $\frak{h}$ be a Cartan subalgebra of $L$.
 Suppose that $L$ and  $M$ possess  weight space decompositions with respect to $\frak{h}_{\bar{0}}$:
$$L=\oplus _{\gamma \in \frak{h}_{\bar{0}}^*}L_{\gamma}, \quad M=\oplus _{\gamma \in
\frak{h}_{\bar{0}}^*}M_{\gamma}.$$
Hereafter, denote by $L^*$ the space consisting of all linear maps from $L$  to $\mathbb{F}$ for Lie superalgebra $L$.
Write
\begin{align*}
\mathrm{Hom}_{\mathbb{F}}(L,M)_{(0)}&=\{\phi\in \mathrm{Hom}_{\mathbb{F}}(L,M)\mid \phi(L_{\alpha})\subset M_{\alpha}, \forall \alpha \in  \frak{h}_{\bar{0}}^*\},\\
\mathrm{Der}(L,M)_{(0)}&=\{\phi\in \mathrm{Der}(L,M)\mid \phi(L_{\alpha})\subset M_{\alpha}, \forall \alpha \in  \frak{h}_{\bar{0}}^*\}.
\end{align*}
A linear map  (resp. derivation) in $\mathrm{Hom}_{\mathbb{F}}(L,M)_{(0)}$ (resp. $\mathrm{Der}(L,M)_{(0)}$)  is called a \textit{weight-map} (resp. \textit{weight-derivation}) with respect to $\frak{h}$.
It is a standard fact that
\begin{align}\label{2006131342}
\mathrm{Der}(L,M)=\mathrm{Der}(L,M)_{(0)}+\mathrm{Ider}(L,M),
\end{align}
for a standard proof of which the reader can see \cite[Lemma 3.2]{Bai} or \cite[Lemma 2.1]{sl(21)}.

By definition, the
1-dimensional cohomology of $L$ with coefficients in $M$ is
\begin{align}\label{2006131408}
\mathrm{H}^1(L,M)=\mathrm{Der}(L,M)/\mathrm{Ider}(L,M);
\end{align}
and the 0-dimensional cohomology   is
\begin{align*}
\mathrm{H}^0(L,M)=\left\{m\in M\mid xm=0, \forall x\in L\right\}.
\end{align*}
Two $1$-cocycles (elements in $\mathrm{Der}(L,M)$) are said to be \textit{cohomologous} if their images in $H^1(L,M)$ are equal.
From (\ref{2006131342}) and (\ref{2006131408}), we get the following lemma,
which gives a useful  reduction method in computing the 1-dimensional cohomology of Lie superalgebras.
\begin{lemma}\label{reduction}
Retain the above notations.
Let $\varphi\in\mathrm{H}^1(L,M)$.
Then $\varphi$ is cohomologous to a weight-derivation.
In particular, $\varphi(h)$ lies in $\mathrm{H}^0(L,M)$ for any $h\in \frak{h}_{\bar{0}}$.
\end{lemma}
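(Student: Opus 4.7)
The plan is to derive both assertions from the decomposition identity (\ref{2006131342}) applied inside the quotient (\ref{2006131408}) defining $\mathrm{H}^1(L,M)$. Given a representative derivation $\varphi\in\mathrm{Der}(L,M)$, I would invoke (\ref{2006131342}) to write $\varphi=\psi+\frak{D}_m$ with $\psi\in\mathrm{Der}(L,M)_{(0)}$ a weight-derivation and $\frak{D}_m\in\mathrm{Ider}(L,M)$ an inner derivation. Since classes in $\mathrm{H}^1(L,M)$ are defined modulo $\mathrm{Ider}(L,M)$, this immediately shows that $\varphi$ and $\psi$ are cohomologous, giving the first assertion.

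For the "in particular" claim I would replace $\varphi$ by its weight-derivation representative without loss of generality. The point is that $\frak{h}_{\bar{0}}\subseteq L_0$, so a weight-map sends $\frak{h}_{\bar{0}}$ into $M_0$. For any $\mathbb{Z}_2$-homogeneous $x\in L_\alpha$ and any $h\in\frak{h}_{\bar{0}}$, I would expand $\varphi([h,x])$ in two ways: on the one hand $[h,x]=\alpha(h)x$ gives $\varphi([h,x])=\alpha(h)\varphi(x)$; on the other hand the superderivation identity (with $|h|=\bar{0}$) gives $\varphi([h,x])=h\cdot\varphi(x)-(-1)^{|x||\varphi|}x\cdot\varphi(h)$. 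Because $\varphi(x)\in M_\alpha$, the Cartan element $h$ acts on $\varphi(x)$ by the scalar $\alpha(h)$, so the two $\alpha(h)\varphi(x)$ terms cancel, forcing $x\cdot\varphi(h)=0$. Since $L$ is spanned by its weight vectors, this yields $L\cdot\varphi(h)=0$, i.e. $\varphi(h)\in\mathrm{H}^0(L,M)$.

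The first assertion is essentially a cohomological restatement of (\ref{2006131342}), so there is nothing substantial to do there. The only step requiring attention is the parity bookkeeping in the superderivation identity for the second assertion; the main thing to be careful about is using the weight-map property $\varphi(L_\alpha)\subseteq M_\alpha$ to evaluate $h\cdot\varphi(x)$ as a scalar multiple, which is precisely what makes the two $\alpha(h)\varphi(x)$ terms cancel. Once this observation is in hand, the argument is self-contained and short.
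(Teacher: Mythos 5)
Your proposal is correct and follows essentially the same route as the paper: the first assertion is read off directly from the decomposition $\mathrm{Der}(L,M)=\mathrm{Der}(L,M)_{(0)}+\mathrm{Ider}(L,M)$ together with the definition of $\mathrm{H}^1$, and the second is obtained by expanding $\varphi([h,x])$ in two ways for a weight-derivation representative and cancelling the $\alpha(h)\varphi(x)$ terms. Nothing is missing.
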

\begin{proof}
It is sufficient to show the last assertion.
Since $\varphi$ is cohomologous to a weight-derivation,
we may view $\varphi$ as a weight-derivation.
Let $h\in \frak{h}_{\bar{0}}$ and $x\in L_{\alpha}$ for any $\alpha\in \frak{h}_{\bar{0}}^*$.
Then
\begin{align*}
\alpha(h)\varphi(x)=\varphi([h,x])&=h\varphi(x)-(-1)^{|x||\varphi|}x\varphi(h)\\
&=\alpha(h)\varphi(x)-(-1)^{|x||\varphi|}x\varphi(h).
\end{align*}
It follows that $x\varphi(h)=0$.
This implies $\varphi(h)\in \mathrm{H}^0(L,M)$
since both $x$ and $\alpha$ are arbitrary.
\end{proof}

\subsection{The queer Lie superalgebra $\frak{q}(2)$ and its representation theory}\label{sec.21l}

We follow the reference \cite{wangzhao} for the structrue and representation theory of $q(2)$.
For the convenience of the readers, we summarize some information
as below.

For $k=1,2$, set $\dot{k}=2+k$ for convenience.
Write
 \begin{align*}
 h_1&:=E_{11}+E_{\dot{1}\dot{1}},\quad \;h_2:=E_{22}+E_{\dot{2}\dot{2}},\quad \;e:=E_{12}+E_{\dot{1}\dot{2}},\quad \;f:=E_{21}+E_{\dot{2}\dot{1}},\\ H_1&:=E_{1\dot{1}}+E_{\dot{1}1},\quad H_2:=E_{2\dot{2}}+E_{\dot{2}2},\quad E:=E_{1\dot{2}}+E_{\dot{1}2},\quad F:=E_{2\dot{1}}+E_{\dot{2}1}.
 \end{align*}
Hereafter $E_{ij}$  is the $4\times 4$ matrix unit.
The queer Lie superalgebra
$$q(2)=\langle h_1,h_2,e,f\mid H_1,H_2,E,F\rangle $$
and write it to $\frak{g}$ for short.
We call $\frak{h}:=\langle h_1,h_2\mid H_1,H_2\rangle$   the standard Cartan subalgebra of $\frak{g}$.
Let $\lambda\in\frak{h}_{\bar{0}}^*$.
If $\lambda(h_i)=\lambda_i$ for $i=1,2$,  write $\lambda=(\lambda_1,\lambda_2)$ for short.
With respect to $\frak{h}_{\bar{0}}$, all weight spaces of $\frak{g}$ are listed in the following
\begin{align*}
\frak{g}_{0}=\langle h_1,h_2\mid H_1,H_2\rangle,\quad
 \frak{g}_{(1,-1)}=\langle e\mid E\rangle, \quad
\frak{g}_{(-1,1)}=\langle f\mid F\rangle.
\end{align*}
Letting $\frak{n}^-=\langle f\mid F\rangle$ and $\frak{n}^+=\langle e\mid E\rangle$,
we have a triangular decomposition $\frak{g}=\frak{n}^-\oplus\frak{h}\oplus \frak{n}^+$.

Recall that a restricted Lie superalgebra is a Lie superalgebra, whose even part
is a restricted Lie algebra and
the odd part is a restricted module of the even part by the adjoint action.
Then
$\frak{g}$ is a restricted Lie superalgebra with a $p$-mapping $[p]$ being the usual $p$th power.
Let $V$ be a simple $\frak{g}$-module.
Then there exists $\chi\in \frak{g}_{\bar{0}}^*$,
such that
$$x^pv-x^{[p]}v=\chi(x)^pv, \quad \forall x\in \frak{g}_{\bar{0}},\;\; v\in V.$$
In this case we also call $V$  a simple $\frak{g}$-module with  $p$-character $\chi$.
Fix $\chi\in \frak{g}^*_{\bar{0}}$.
Denote by $I_{\chi}$  the ideal of $U(\frak{g})$ generated by the elements
$x^p-x^{[p]}-\chi(x)^p$
for all $x\in \frak{g}_{\bar{0}}$.
Write $U_{\chi}(\frak{g})=U(\frak{g})/I_{\chi}$,
which is called the reduced enveloping superalgebra with $p$-character $\chi$.
Note that a simple $\frak{g}$-module with $p$-character $\chi$
is the same as a simple $U_{\chi}(\frak{g})$-module.
Any $p$-character $\chi'$ is $G$-conjugate to a $p$-character $\chi$ with $\chi(\frak{n}_{\bar{0}}^+)=0$
and $U_{\chi'}(\frak{g})=U_{\chi}(\frak{g})$, where $\frak{g}_{\bar{0}} = \mathrm{Lie}(G).$
Therefore the study of simple  $\frak{g}$-modules
is reduced to a problem of studying simple ones  with  $p$-character $\chi$  when $\chi$ runs over the
representatives of coadjoint $G$-orbits in $\frak{g}_{\bar{0}}^*$ (see also \cite[Remark 2.3]{14}).
Recall that there are three coadjoint $G$-orbits
with the following representatives (see \cite[Sec. 6]{wangzhao}):
\begin{itemize}
\item[(1)]  nilpotent: $\chi(e)=\chi(h_1)=\chi(h_2)=0$ and $\chi(f)=1.$
\item[(2)] semisimple: $\chi(e)=\chi(f)=0, \chi(h_1)=a, \chi(h_2)=b$ for some $a,b\in \mathbb{F}$.
\item[(3)] mixed: $\chi(e)=0, \chi(f)=1, \chi(h_1)=\chi(h_2)=a$ for some $a\in \mathbb{F}\backslash\{0\}$.
\end{itemize}
Hereafter the symbol $\chi$ implies that
$\chi\in \frak{g}^*_{\bar{0}}$ and
$\chi$ is either  nilpotent or semisimple or mixed.

Below we recall simple $U_{\chi}(\frak{h})$-modules constructed in \cite[Sec. 2.3]{wangzhao}.
 Let $\lambda\in \frak{h}_{\bar{0}}^*$ and
 $\frak{h}_1$  a maximal isotropic subspace with respect to the following bilinear form on $\frak{h}_{\bar{1}}$
$$(a,b)_{\lambda}:=\lambda([a,b]),\quad \forall a,\;b \in \frak{h}_{\bar{1}}.$$
Then $\lambda$ can be extended to a one-dimensional $(\frak{h}_{\bar{0}}+\frak{h}_1)$-module $\mathbb{F}_{\lambda}$ of
by letting $\frak{h}_1\mathbb{F}_{\lambda}=0$.
Write
$$\Lambda_{\chi}=\left\{\lambda\in \frak{h}_{\bar{0}}^*\mid \lambda(h)^p-\lambda(h)=\chi(h)^p, h\in\frak{h}_{\bar{0}}\right\}.$$
Then $\mathbb{F}_{\lambda}$ is a $U_{\chi}(\frak{h}_{\bar{0}}+\frak{h}_1)$-module
 if and only if $\lambda\in \Lambda_{\chi}$.
Write
$$V_{\chi}(\lambda)=U_{\chi}(\frak{h})\otimes_{U_{\chi}(\frak{h}_{\bar{0}}+\frak{h}_1)}\mathbb{F}_{\lambda}, \quad \lambda\in \Lambda_{\chi}.$$
Then $V_{\chi}(\lambda)$ is a simple $U_{\chi}(\frak{h})$-module.
Recall that the baby Verma module of $U_{\chi}(\frak{g})$  with   highest weight $\lambda$ and $p$-character $\chi$
is
$$ Z_{\chi}(\lambda):=U_{\chi}(\frak{g})\otimes_{U_{\chi}(\frak{h}\oplus\frak{n}^+)}V_{\chi}(\lambda),\quad \lambda\in \Lambda_{\chi}.$$
Denote by $L_{\chi}(\lambda)$ the unique simple quotient of $ Z_{\chi}(\lambda)$,
which is also of highest weight $\lambda$ and $p$-character $\chi$.
Write $v_{\lambda}$ for the highest weight-vector of weight $\lambda$ in $V_{\chi}(\lambda)$ and set $|v_{\lambda}|=\bar{0}$.
For convenient, write
$( i, j, k)$ and $[i, j, k]$ for the elements $f^iF^jH_1^kv_{\lambda}$ and
$f^iF^jH_2^kv_{\lambda}$ in $ Z_{\chi}(\lambda)$, respectively.
In this paper, the symbols $f^a, (a,j,k), [a, j, k]$ and $\underline{a}$ always imply that $a$ is the smallest nonnegative integer
in the residue class containing $a$ modulo $p$.
We also use $( i, j, k)$ or $[i, j, k]$
to represent the residue class containing $( i, j, k)$ or $[i, j, k]$ in $L_{\chi}(\lambda)$.
If $x=\sum^{p-1}_{i=0}\sum_{j=0}^1\sum_{k=0}^1a_{ijk}(i,j,k)$,
we write $x^{(i,j,k)}$ for $a_{ijk}$.

\begin{remark}\cite[Sec. 5]{wangzhao}\label{basis}
Let $\chi\in\frak{g}_{\bar{0}}^*$ and $\lambda=(\lambda_1,\lambda_2)\in \Lambda_{\chi}$.
\begin{itemize}
\item[(1)]  If $\lambda=0$, $ Z_{\chi}(\lambda)$ has a basis
$$\left\{(a,j,0)\mid j=0,1, 0\leq a\leq p-1\right\}.$$

\item[(2)]  If $\lambda_1\neq 0$, $ Z_{\chi}(\lambda)$ has a basis
$$\left\{(a,j,k)\mid j,k=0,1, 0\leq a\leq p-1\right\}.$$

\item[(3)]  If $\lambda_1=0$ and $\lambda_2\neq0$, $ Z_{\chi}(\lambda)$ has a basis
$$\left\{[a,j,k]\mid j,k=0,1, 0\leq a\leq p-1\right\}.$$
\end{itemize}
\end{remark}
\section{$\mathrm{H}^0(\frak{g},  Z_{\chi}(\lambda))$}
In this paper, the symbol $\delta_{P}$ means 1 if a proposition $P$ is true, and 0 otherwise.
We list some formulas about the $\frak{g}$-action on $ Z_{\chi}(\lambda)$
(see \cite[Sec.5]{wangzhao} for details) in the following.

\begin{remark}\label{200823907}
The $\frak{g}$-action on $ Z_{\chi}(\lambda)$ is given in the following.
\begin{itemize}
\item[(1)]
Let $\lambda=(0,\lambda_2)\neq 0$. Then
\begin{align}
F[a,j,k]=&\delta_{j=0}[a,1,k]\nonumber,\\
f[a,j,k]=&\delta_{a\neq p-1}[a+1,j,k]+\delta_{a= p-1}\chi(f)^p[0,j,k]\nonumber,\\
H_i[a,0,k]=&(-1)^{i}a[a-1,1,k]+\delta_{i=2}\left(\delta_{k=0}+\lambda_2\delta_{k=1}\right)[a,0,\delta_{k=0}]\nonumber,\\
E[a,1,k]=&a\left(\delta_{k=0}+\lambda_2\delta_{k=1}\right)[a-1,1,\delta_{k=0}]+\lambda_2[a,0,k]\label{E002},\\
E[a,0,k]=&-a\left(\delta_{k=0}+\lambda_2\delta_{k=1}\right)[a-1,0,\delta_{k=0}]-a(a-1)[a-2,1,k]\nonumber,\\
e[a,j,k]=&-a\left(a-(-1)^j+\lambda_2\right)[a-1,j,k]\nonumber\\
&-\delta_{(j,k)=(1,0)}[a,0,1]-\delta_{(j,k)=(1,1)}\lambda_2[a,0,0]\nonumber,\\
H_i[a,1,k]=&\delta_{a\neq p-1}[a+1,0,k]+\delta_{a= p-1}\chi(f)^p[0,0,k]\nonumber\\
&-\delta_{i=2}\left(\delta_{k=0}+\lambda_2\delta_{k=1}\right)[a,1,\delta_{k=0}].\nonumber
\end{align}

\item[(2)]
If $\lambda=(\lambda_1,0)\neq 0$, then
\begin{align*}
F(a,j,k)=&\delta_{j=0}(a,1,k),\\
f(a,j,k)=&\delta_{a\neq p-1}(a+1,j,k)+\delta_{a= p-1}\chi(f)^p(0,j,k),\\
H_i(a,0,k)=&(-1)^{i}a(a-1,1,k)+\delta_{i=1}\left(\delta_{k=0}+\lambda_1\delta_{k=1}\right)(a,0,\delta_{k=0}),\\
E(a,1,k)=&-a\left(\delta_{k=0}+\lambda_1\delta_{k=1}\right)(a-1,1,\delta_{k=0})+\lambda_1(a,0,k),\\
E(a,0,k)=&a\left(\delta_{k=0}+\lambda_1\delta_{k=1}\right)(a-1,0,\delta_{k=0})-a(a-1)(a-2,1,k),\\
e(a,j,k)=&a\left(\lambda_1-a+(-1)^j\right)(a-1,j,k)\\
&+\delta_{(j,k)=(1,0)}(a,0,1)+\delta_{(j,k)=(1,1)}\lambda_1(a,0,0),
\end{align*}

\begin{equation}
\begin{split}\label{H1010}
H_i(a,1,k)=&\delta_{a\neq p-1}(a+1,0,k)+\delta_{a= p-1}\chi(f)^p(0,0,k)\\
&-\delta_{i=1}\left(\delta_{k=0}+\lambda_1\delta_{k=1}\right)(a,1,\delta_{k=0}).
\end{split}
\end{equation}
\item[(3)]
If $\lambda_1=\lambda_2\neq0$  or
 $0\neq \lambda_1^2\neq \lambda_2^2\neq 0$,
then
\begin{align*}
F(a,j,k)=&\delta_{j=0}(a,1,k),\\
f(a,j,k)=&\delta_{a\neq p-1}(a+1,j,k)+\delta_{a= p-1}\chi(f)^p(0,j,k),\\
E(a,1,k)=&-a\left(\delta_{k=0}(1+\mu^{-1})+(\lambda_1+\mu\lambda_2)\delta_{k=1}\right)(a-1,1,\delta_{k=0})\\
&+(\lambda_1+\lambda_2)(a,0,k),\\
E(a,0,k)=&a\left(\delta_{k=0}(1+\mu^{-1})+(\lambda_1+\mu\lambda_2)\delta_{k=1}\right)(a-1,0,\delta_{k=0})\\
&-a(a-1)(a-2,1,k),\\
e(a,j,k)=&\delta_{(j,k)=(1,1)}(\lambda_1+\mu\lambda_2)(a,0,0)+\delta_{(j,k)=(1,0)}(1+\mu^{-1})(a,0,1)\\
&+a\left(\lambda_1-\lambda_2-a+(-1)^j\right)(a-1,j,k),\\
H_i(a,0,k)=&(-1)^{i}a(a-1,1,k)+\delta_{i=1}\left(\delta_{k=0}+\lambda_1\delta_{k=1}\right)(a,0,\delta_{k=0})\\
&-\delta_{i=2}\left(\delta_{k=0}\mu^{-1}+\mu\lambda_2\delta_{k=1}\right)(a,0,\delta_{k=0}),
\end{align*}
\begin{equation}
\begin{split}\label{H1111}
H_i(a,1,k)=&\delta_{a\neq p-1}(a+1,0,k)+\delta_{a= p-1}\chi(f)^p(0,0,k)\\
&-\delta_{i=1}\left(\delta_{k=0}+\lambda_1\delta_{k=1}\right)(a,1,\delta_{k=0})\\
&+\delta_{i=2}\left(\delta_{k=0}\mu^{-1}+\mu\lambda_2\delta_{k=1}\right)(a,1,\delta_{k=0}),
\end{split}
\end{equation}
where $\mu^2=-1$ if $\lambda_1=\lambda_2\neq0$, and $\mu\lambda_2+\lambda_1\mu^{-1}=0$ if
 $0\neq \lambda_1^2\neq \lambda_2^2\neq 0$.

\item[(4)] If $\lambda=(\lambda_1,-\lambda_1)$, then
\begin{align}
&h_i(a,j,k)=(-1)^i(a+j-\lambda_1)(a,j,k)\label{h},\\
&F(a,j,k)=\delta_{j=0}(a,1,k)\label{F},\\
&f(a,j,k)=\left\{\begin{array}{ll}
\chi(f)^p(0,j,k), &\mbox{if  $a= p-1$}\\
(a+1,j,k), &\mbox{if $a\neq p-1$}
\end {array}\right.\label{f}.
\end{align}
Besides, the following  formulas are true.
\begin{itemize}
\item If $\lambda_1\neq 0$, then
\begin{align}
&e(a,j,k)=a\left(2\lambda_1-a+(-1)^j\right)(a-1,j,k)+\delta_{(j,k)=(1,0)}2(a,0,1)\label{e},\\
&H_i(a,1,k)=\left((-1)^i\delta_{k=0}-\lambda_1\delta_{k=1}\right)(a,1,\delta_{k=0})+\left\{\begin{array}{ll}
\chi(f)^p(0,0,k), &\mbox{if  $a= p-1$}\\
(a+1,0,k), &\mbox{if $a\neq p-1$}
\end {array}\right.\label{a1k},\\
&H_i(a,0,k)=\left((-1)^{i+1}\delta_{k=0}+\lambda_1\delta_{k=1}\right)(a,0,\delta_{k=0})+(-1)^{i}a(a-1,1,k)\label{a0k},\\
&E(a,j,k)=\delta_{(j,k)\neq(1,1)}\left(\delta_{k=0}(-1)^j2a(a-1,j,1)-\delta_{j=0}a(a-1)(a-2,1,k)\right)\label{E}.
\end{align}
\item  If $\lambda_1= 0$, then
\begin{align}
&e(a,j,0)=-a\left(a-(-1)^j\right)(a-1,j,0)\label{e0},\\
&H_i(a,1,0)=\left\{\begin{array}{ll}
\chi(f)^p(0,0,0), &\mbox{if  $a= p-1$}\\
(a+1,0,0), &\mbox{if $a\neq p-1$}
\end {array}\right.\label{a10},\\
&H_i(a,0,0)=(-1)^{i}a(a-1,1,0)\label{a00},\\
&E(a,j,0)=-\delta_{j=0}a(a-1)(a-2,1,0)\nonumber.
\end{align}
\end{itemize}
\end{itemize}
\end{remark}

In the following, we give a proof of the first part of Theorem \ref{Th1}.

\textit{The proof of Formula (\ref{0Z}) in Theorem \ref{Th1}}:
Let $x\in \mathrm{H}^0(\frak{g}, Z_{\chi}(\lambda))$.
Since $\mathrm{H}^0(\frak{g}, Z_{\chi}(\lambda))$ is a weight-(super)module,
 $x$ may be viewed as one of the following forms:
 \begin{align*}
 &x_1(a+1,0,0)+x_2(a,1,1)\;\;\mbox{ or}\;\; x_3(a+1,0,1)+x_4(a,1,0)\;\;\mbox{if}\;\; \lambda_1\neq 0;\\
&y_1[a+1,0,0]+y_2[a,1,1]\;\;\mbox{or}\;\; y_3[a+1,0,1]+y_4[a,1,0]\;\;\mbox{if}\;\; \lambda=(0,\lambda_2)\neq 0;\\
&z_1(a+1,0,0) \;\;\mbox{or} \;\;z_2(a,1,0)\;\; \mbox{if}\;\;  \lambda= 0,
\end{align*}
where $x_i,y_i,z_i\in \mathbb{F}$.
By $Fx=0$, we have
$$x_1=x_3=y_1=y_3=z_1=0.$$

\textit{Case  $\lambda=(0,\lambda_2)\neq 0$}:
Note that
\begin{align*}
&0=Ey_2[a,1,1]\stackrel{(\ref{E002})}{=}y_2\lambda_2\left(a[a-1,1,0]+[a,0,1]\right),\\
&0=Ey_4[a,1,0]\stackrel{(\ref{E002})}{=}y_4\left(a[a-1,1,1]+\lambda_2[a,0,0]\right).
\end{align*}
It follows that $y_2=y_4=0$.

\textit{Case  $\lambda=0$}:
Note that
$$0=H_1z_2(a,1,0)\stackrel{(\ref{a10})}{=}z_2\left(\delta_{a=p-1}\chi(f)^p(0,0,0)+\delta_{a\neq p-1}(a+1,0,0)\right).$$
It follows that $z_2=0$ in case $a\neq p-1$ or $\chi(f)\neq 0$.
As a result, $x=z_2(p-1,1,0)$ if $\chi=0$.
It is clear that $(p-1,1,0)$ is in $\mathrm{H}^0(\frak{g}, Z_{\chi}(\lambda))$ in case $(\lambda,\chi)=(0,0)$.
Then $\mathrm{H}^0(\frak{g}, Z_{\chi}(\lambda))=\langle0\mid (p-1,1,0)\rangle$ if $(\lambda,\chi)=(0,0)$, and 0 otherwise.

\textit{Case  $\lambda_1=\lambda_2\neq0$  or
 $0\neq \lambda_1^2\neq \lambda_2^2\neq 0$}:
Note that
\begin{equation}
\begin{split}\label{2008231753}
&0=H_1x_2(a,1,1)\stackrel{(\ref{H1111})}{=}x_2\left(\delta_{a\neq p-1}(a+1,0,1)+\delta_{a=p-1}\chi(f)^P(0,0,1)-\lambda_1(a,1,0)\right),\\
&0=H_1x_4(a,1,0)\stackrel{(\ref{H1111})}{=}x_4\left(\delta_{a\neq p-1}(a+1,0,0)+\delta_{a=p-1}\chi(f)^P(0,0,0)-(a,1,1)\right).
\end{split}
\end{equation}
It follows that $x_2=x_4=0$.

\textit{Case  $\lambda=(\lambda_1,0)\neq 0$}:
By use of (\ref{H1010}), we may get the equation (\ref{2008231753}), which implies that  $x_2=x_4=0$.

\textit{Case  $\lambda=(\lambda_1,-\lambda_1)\neq 0$}:
By use of (\ref{a1k}), we may get the equation (\ref{2008231753}), which implies that  $x_2=x_4=0$.

It follows that
\begin{align}
\mathrm{H}^0(\frak{g}, {\mathrm{Z}}_{\chi}(\lambda))=\left\{\begin{array}{ll}
\langle 0 \mid (p-1,1,0)\rangle,  &(\chi,\lambda)=(0,0)\\
0,  &otherwise.
\end{array}\right.\label{0h}
\end{align}
As a result Formula (\ref{0Z}) is true.

\section{$\mathrm{H}^1(\frak{g},  Z_{\chi}(\lambda))$ and $\mathrm{H}^1(\frak{g}, L_{\chi}(\lambda))$ }

The following proposition determines the unique simple quotient $L_{\chi}(\lambda)$ of $Z_{\chi}(\lambda)$ in case $\lambda=(\lambda_1, -\lambda_1)\in \mathbb{F}_p^2$.
\begin{proposition}\label{200822839}
Let $\lambda=(\lambda_1, -\lambda_1)$.
\begin{itemize}
\item[(1)] Let $\lambda_1\in \mathbb{F}_p\backslash\left\{0\right\}$.
Denote by $M_1$ the subspace of $Z_{\chi}(\lambda)$ with a basis
$$
(a,1,1),\quad (c,0,0),\quad (c,0,1),\quad (c,1,0),\quad (b+1,0,1)-\lambda_1(b,1,0),\quad (2\lambda_1,1,0),
$$
where
$$0\leq a\leq p-1,\quad 0\leq b\leq \underline{2\lambda_1-1},\quad \underline{2\lambda_1+1}\leq c\leq p-1;$$
denote by  $M_2$ the subspace of $Z_{\chi}(\lambda)$ with a basis
$$
\left\{(a,1,1), (a+1,0,1)-\lambda_1(a,1,0)\mid 0\leq a\leq p-1\right\}.
$$
 Then
 $$L_{\chi}(\lambda)=\left\{\begin{array}{ll}
Z_{\chi}(\lambda)/M_1, &\chi=0\\
Z_{\chi}(\lambda)/M_2, &\chi\;\;\mbox{is  nilpotent},
\end{array}\right.$$
 \item[(2)] Let $\lambda=0$.
 Denote by $M_3$ the subspace of $Z_{\chi}(\lambda)$ with a basis
$$
\left\{(a,0,0),(a,1,0),(0,1,0)\mid 1\leq a\leq p-1\right\}.
$$
 Then
 $$L_{\chi}(\lambda)=\left\{\begin{array}{ll}
 Z_{\chi}(\lambda)/M_3, &\chi=0\\
  Z_{\chi}(\lambda), &\chi\;\;\mbox{is  nilpotent}.
\end{array}\right.$$
 \end{itemize}
\end{proposition}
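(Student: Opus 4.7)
The plan is to verify, in each case, two things: that $M_i$ is a proper $\frak{g}$-submodule of $Z_\chi(\lambda)$, and that the quotient $Z_\chi(\lambda)/M_i$ is simple. Since $Z_\chi(\lambda)$ is a highest weight module and so has a unique maximal proper submodule, these two facts together force $M_i$ to be that maximal submodule, and $L_\chi(\lambda) = Z_\chi(\lambda)/M_i$ follows. For the nilpotent-$\chi$ half of case (2), the second task is replaced by showing that $Z_\chi(0)$ is already simple.

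For the submodule verification I would apply the action formulas \eqref{h}--\eqref{a00} of Remark \ref{200823907}(4) to each basis element of $M_i$ against each generator $f, F, e, E, h_i, H_i$, and confirm that the output lies in $M_i$. The weight formula \eqref{h} first explains why the combinations $(b+1,0,1) - \lambda_1(b,1,0)$ are single weight vectors at all. Three identities then do most of the heavy lifting: \eqref{a1k} gives $H_i(a,1,1) = (a+1,0,1) - \lambda_1(a,1,0)$ up to the $a = p-1$ wrap, so the two families of $M_2$ are tied together; \eqref{e} gives $e\bigl((a+1,0,1)-\lambda_1(a,1,0)\bigr) = a(2\lambda_1-a-1)\bigl[(a,0,1)-\lambda_1(a-1,1,0)\bigr]$; and \eqref{E} gives $E\bigl((a+1,0,1)-\lambda_1(a,1,0)\bigr) = a(2\lambda_1-a-1)(a-1,1,1)$. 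The common scalar $a(2\lambda_1-a-1)$ vanishes at the threshold $a = \underline{2\lambda_1}$, which is precisely what forces the piecewise shape of the basis of $M_1$, isolating the singleton $(2\lambda_1,1,0)$ and the high-index triples $(c,0,0), (c,0,1), (c,1,0)$.

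For the simplicity of the quotients I would use a cyclicity-plus-raising argument. The image of $v_\lambda = (0,0,0)$ generates $Z_\chi(\lambda)/M_i$, because $Z_\chi(\lambda)$ is itself cyclically generated by $v_\lambda$ over $U_\chi(\frak{g})$. It therefore suffices to show that any nonzero weight-homogeneous class in the quotient can be sent to a nonzero multiple of the class of $v_\lambda$ by a suitable product of $e$-, $E$- and $H_i$-actions. This is manageable because \eqref{e} and \eqref{E} decrease $a$ with nonzero scalars on what survives in the quotient (precisely the zeros of $a(2\lambda_1-a-1)$ have been absorbed into $M_i$), while \eqref{a1k}--\eqref{a0k} let us swap among the $(j,k)$-types. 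For the nilpotent subcase of (2), the same argument applies directly to $Z_\chi(0)$, exploiting that the $f$-action is an invertible cyclic shift on $a$ since $\chi(f)^p = 1$.

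The main obstacle is the $\chi = 0$ subcase of (1): $M_1$ carries a piecewise basis of four different types, and checking closure across the boundary $a = \underline{2\lambda_1}$ requires matching the degenerate value of $a(2\lambda_1-a-1)$ against the transition among the combinations $(b+1,0,1)-\lambda_1(b,1,0)$, the singleton $(2\lambda_1,1,0)$, and the triples $(c,0,0), (c,0,1), (c,1,0)$, together with the $\chi(f)^p$-wrap at $a=p-1$. I expect this bookkeeping to be the most error-prone step; once it is done, the dimension count $\dim L_\chi(\lambda) = 2\underline{2\lambda_1}+2$ serves as a useful cross-check on the simplicity claim.
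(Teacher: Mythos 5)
Your proposal is correct and takes essentially the same route as the paper: one verifies that $M_i$ is a submodule by direct application of the formulas in Remark \ref{200823907}(4), and then proves simplicity of the quotient by showing that every $\mathbb{Z}_2$-homogeneous weight vector generates it --- raising to $(0,0,0)$ with nonzero scalars via \eqref{e} and using cyclicity, with the relation $H_1(a,0,1)=\lambda_1(a,0,0)$ handling the odd basis vectors exactly as in the paper. The only slip is cosmetic: the common scalar $a(2\lambda_1-a-1)$ vanishes at $a=0$ and $a=\underline{2\lambda_1-1}$, not at $a=\underline{2\lambda_1}$.
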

\begin{proof}
Let $i=1,2$ or $3$.
From Remark \ref{200823907} (4),
it is routine to show that $M_i$ is a submodule of $Z_{\chi}(\lambda)$ under the corresponding condition.

It is sufficient to show that  $Z_{\chi}(\lambda)/M_i$
can be generated by any nonzero element.
 $M_i$ is a weight (super)module, so is $Z_{\chi}(\lambda)/M_i$.
 Then it is sufficient to show that $Z_{\chi}(\lambda)/M_i$
can be generated by any $\mathbb{Z}_2$-homogeneous weight vector.

Note that $Z_{0}(\lambda)/M_1$ has a basis consisting of the following $\mathbb{Z}_2$-homogeneous weight vectors
 $$\left\{(a,0,0), (a,0,1)\mid 0\leq a\leq \underline{2\lambda_1}\right\}.$$
Let $0\leq a\leq \underline{2\lambda_1}.$
By (\ref{e}), one sees that $Z_{0}(\lambda)/M_1$ can be generated  by $(a,0,0)$.
Then by $H_1(a,0,1)\stackrel{(\ref{a0k})}{=}\lambda_1(a,0,0)$ in $Z_{0}(\lambda)/M_1$ and $\lambda_1\neq 0$, one sees that
 $Z_{0}(\lambda)/M_1$ can also be generated by $(a,0,1)$.

Note that $Z_{\chi}(\lambda)/M_2$ has a basis consisting of the following $\mathbb{Z}_2$-homogeneous weight vectors
 $$\left\{(a,0,0), (a,0,1)\mid 0\leq a\leq p-1\right\}.$$
Let $0\leq a\leq p-1$.  Hence $Z_{\chi}(\lambda)/M_2$ can be generated  by $(a,0,0)$ in case $\chi$ is nilpotent
because of (\ref{f}).
Then by $H_1(a,0,1)\stackrel{(\ref{a0k})}{=}\lambda_1(a,0,0)$ in $Z_{\chi}(\lambda)/M_2$ and $\lambda_1\neq 0$, one sees that
 $Z_{\chi}(\lambda)/M_2$ can also be generated by $(a,0,1)$.

Note that $Z_{\chi}(\lambda)/M_3$ has a basis $\{(0,0,0)\}$,
which can generate $Z_{\chi}(\lambda)/M_3$ obviously.

Let $\chi$ be nilpotent.
Note that $Z_{\chi}(0)$ has a basis consisting of the following $\mathbb{Z}_2$-homogeneous weight vectors
$$\{(a,0,0),(a,1,0)\mid 0\leq a\leq p-1\}.$$
Let $0\leq a\leq p-1$. Hence $Z_{\chi}(0)$ can be generated  by $(a,0,0)$ in case $\chi$ is nilpotent
because of (\ref{f}).
Then by (\ref{a10}), $Z_{\chi}(0)$ can be generated by $(a,1,0)$.
\end{proof}

\subsection{Target-weight spaces}
In view of Lemma \ref{reduction}, in order to determine $\mathrm{Der}(\frak{g}, Z_{\chi}(\lambda))_{(0)}$ and $\mathrm{Der}(\frak{g},L_{\chi}(\lambda))_{(0)}$, we shall give the weight-spaces $ Z_{\chi}(\lambda)_{\alpha}$
and  $ Z_{\chi}(\lambda)_{\alpha}$ for $\alpha=(1,-1),(-1,1),0$.
The weights $(1,-1),(-1,1)$ and $0$
are called the target-weights of $ Z_{\chi}(\lambda)$ or $L_{\chi}(\lambda)$.
The following two lemmas determine all the target-weight spaces of
$ Z_{\chi}(\lambda)$ and $L_{\chi}(\lambda)$.
\begin{lemma}\label{weightspace}
For $\lambda=(\lambda_1,\lambda_2)$, the target-weight spaces of
$ Z_{\chi}(\lambda)$ and $L_{\chi}(\lambda)$ are listed below.
\begin{itemize}
\item[(1)]
Let $\lambda_1=-\lambda_2\notin \mathbb{F}_p$ or $\lambda_1\neq-\lambda_2$. Then
$$ Z_{\chi}(\lambda)_{\alpha}=L_{\chi}(\lambda)_{\alpha}=0, \quad \alpha=0,\; (1,-1),\;(-1,1).$$

\item[(2)]
Let $\lambda_1=-\lambda_2\in \mathbb{F}_p\backslash\{0\}$. Then
\begin{align*}
 Z_{\chi}(\lambda)_{\alpha}&=\left\{\begin{array}{ll}\langle (\lambda_1,0,0),(\lambda_1-1,1,1)\mid  (\lambda_1-1,1,0),(\lambda_1,0,1)\rangle, &\alpha=0\\
\langle (\lambda_1+1,0,0),(\lambda_1,1,1) \mid (\lambda_1,1,0),(\lambda_1+1,0,1)\rangle,  &\alpha=(-1,1)\\
\langle (\lambda_1-1,0,0),(\lambda_1-2,1,1)\mid (\lambda_1-2,1,0),(\lambda_1-1,0,1)\rangle,  &\alpha=(1,-1).
\end{array}\right.
\end{align*}

\item[(3)]
Let $\lambda=0$. Then
\begin{align*}
 Z_{\chi}(\lambda)_{\alpha}&=\left\{\begin{array}{ll}
\langle (0,0,0)\mid (p-1,1,0)\rangle, &\alpha=0\\
\langle (p-1,0,0)\mid (p-2,1,0)\rangle, &\alpha=(1,-1)\\
\langle (1,0,0)\mid (0,1,0)\rangle, &\alpha=(-1,1).
\end{array}\right. \\
L_{0}(\lambda)_{\alpha}&=\left\{\begin{array}{ll}
\langle (0,0,0)\mid 0\rangle, &\alpha=0\\
0, &\alpha=(1,-1), (-1,1).
\end{array}\right.
\end{align*}
\end{itemize}
\end{lemma}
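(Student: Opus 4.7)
The plan is to compute the $\mathfrak{h}_{\bar{0}}$-weight and parity of each basis vector supplied by Remark \ref{basis}, and then simply enumerate which basis vectors land in each of the three target weights $0,(1,-1),(-1,1)$. Since $f,F\in \mathfrak{g}_{(-1,1)}$ have weight $(-1,1)$ while $H_1,H_2\in \mathfrak{g}_{\bar{0}}$ act trivially on weights, the vector $(a,j,k)=f^{a}F^{j}H_1^{k}v_{\lambda}$ (resp.\ $[a,j,k]$) carries weight $(\lambda_1-a-j,\,\lambda_2+a+j)$ and parity $\overline{j+k}$. In particular, the sum of the two coordinates of any weight occurring in $Z_{\chi}(\lambda)$ equals $\lambda_1+\lambda_2$, so a target-weight in $\{0,(1,-1),(-1,1)\}$ can only occur when $\lambda_1+\lambda_2=0$. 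Beyond this, one needs an integer solution $(a,j)\in\{0,\dots,p-1\}\times\{0,1\}$ to $a+j\equiv \lambda_1-\alpha_1\pmod{p}$.

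For case (1), either $\lambda_1+\lambda_2\neq 0$ (no target-weight possible) or $\lambda_1=-\lambda_2\notin\mathbb{F}_p$ (so no integer $a+j$ matches $\lambda_1$ mod $p$); both force all three target-weight spaces of $Z_{\chi}(\lambda)$ to vanish. Since $L_{\chi}(\lambda)$ is a quotient of $Z_{\chi}(\lambda)$, its weight spaces vanish as well. For case (2), with $\lambda_1=-\lambda_2\in \mathbb{F}_p\setminus\{0\}$, Remark \ref{basis}(2) supplies the full basis $\{(a,j,k)\mid 0\le a\le p-1,\; j,k\in\{0,1\}\}$. For each target-weight one gets exactly two pairs $(a,j)$ (by selecting $j=0$ with $a\equiv \lambda_1-\alpha_1\pmod p$ and $j=1$ with $a\equiv \lambda_1-\alpha_1-1\pmod p$), each of which appears with $k=0$ and $k=1$; grouping the resulting four vectors by the parity $\overline{j+k}$ gives exactly the lists stated (the convention that the first slot is reduced modulo $p$ handles borderline cases such as $\lambda_1=1$ or $\lambda_1=p-1$, where $\lambda_1-2$ or $\lambda_1+1$ wraps around).

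For case (3), where $\lambda=0$, Remark \ref{basis}(1) gives the smaller basis $\{(a,j,0)\mid 0\le a\le p-1,\; j\in\{0,1\}\}$ with only $k=0$ permitted, so each target-weight yields only the two vectors (one even, one odd) written in the statement. To obtain $L_0(0)$, I would invoke Proposition \ref{200822839}(2): $L_0(0)=Z_0(0)/M_3$ where the spanning set of $M_3$ kills every $(a,1,0)$ and every $(a,0,0)$ with $a\ge 1$, so only $(0,0,0)$ survives, yielding $L_0(0)_{0}=\langle(0,0,0)\mid 0\rangle$ and vanishing in the other two target-weights. The argument is essentially bookkeeping; the only mild subtlety is tracking the reduction-mod-$p$ convention for the first slot and checking that both parities really appear for each target-weight, which is transparent from $\overline{j+k}$.
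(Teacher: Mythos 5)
Your proposal is correct and follows essentially the same route as the paper: compute the weight $(\lambda_1-a-j,\lambda_2+a+j)$ (and parity $\overline{j+k}$) of each PBW basis vector from Remark \ref{basis}, note that the coordinate sum $\lambda_1+\lambda_2$ forces case (1) to vanish, enumerate the matching $(a,j)$ for cases (2)--(3), and pass to $L_0(0)$ via Proposition \ref{200822839}(2). The paper compresses this into ``by a direct computation from Remark \ref{basis} and (\ref{h})''; your bookkeeping, including the parity sorting and the mod-$p$ wraparound of the first slot, is exactly what that computation amounts to.
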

\begin{proof}
Note that in $Z_{\chi}(\lambda)$,
$(a,j,k)$ or
$[a,j,k]$ has the weight
$(\lambda_1-a-j, \lambda_2+a+j)$, since
$$h_i(a,j,k)=\left(\delta_{i=1}(\lambda_1-a-j)+\delta_{i=2}(\lambda_2+a+j)\right)(a,j,k).$$
Then the condition $\lambda_1+\lambda_2\neq 0$ or $\lambda_1=-\lambda_2\notin \mathbb{F}_p$ implies that
$$ Z_{\chi}(\lambda)_{\alpha}=0, \quad \alpha=0,\; (1,-1),\;(-1,1).$$
Then
$$L_{\chi}(\lambda)_{\alpha}=0, \quad \alpha=0,\; (1,-1),\;(-1,1)$$
in case $\lambda_1+\lambda_2\neq 0$ or $\lambda_1=-\lambda_2\notin \mathbb{F}_p$.
Hence (1) is true.
By a direct computation, the conclusions on $ Z_{\chi}(\lambda)$ in (2) and (3) are true from Remark \ref{basis} and (\ref{h}).

Let $\lambda=0$.
Then the conclusions on $L_{\chi}(\lambda)$ in (3) are true from  Proposition \ref{200822839}.
\end{proof}

\begin{lemma}\label{weightspacel}
Let $\lambda=(\lambda_1,-\lambda_1)\in \mathbb{F}_p^2$.
\begin{itemize}
\item[(1)]
If $\lambda_1=1$, then
$$
L_{0}(\lambda)_{\alpha}=\left\{\begin{array}{ll}\langle (1,0,0)\mid (0,1,0)\rangle, &\alpha=0\\
\langle (2,0,0) \mid (1,1,0)\rangle,  &\alpha=(-1,1)\\
\langle (0,0,0)\mid (0,0,1)\rangle,  &\alpha=(1,-1).
\end{array}\right.
$$

\item[(2)]
If $\lambda_1=p-1$, then
$$
L_{0}(\lambda)_{\alpha}=\left\{\begin{array}{ll}0, &\alpha=0\\
\langle (0,0,0) \mid (0,0,1)\rangle,  &\alpha=(-1,1)\\
\langle (p-2,0,0)\mid (p-3,1,0)\rangle,  &\alpha=(1,-1).
\end{array}\right.
$$

\item[(3)]
If $p\geq 5$ and  $2\leq\lambda_1\leq \frac{p-1}{2}$, then
$$
L_{0}(\lambda)_{\alpha}=\left\{\begin{array}{ll}\langle (\lambda_1,0,0)\mid (\lambda_1-1,1,0)\rangle, &\alpha=0\\
\langle (\lambda_1+1,0,0) \mid (\lambda_1,1,0)\rangle,  &\alpha=(-1,1)\\
\langle (\lambda_1-1,0,0)\mid (\lambda_1-2,1,0)\rangle,  &\alpha=(1,-1).
\end{array}\right.
$$

\item[(4)]
If $p\geq 5$ and   $\frac{p+1}{2}\leq \lambda_1\leq p-2$, then
$L_{0}(\lambda)_{\alpha}=0$ for $\alpha=0,(-1,1),(1,-1).$

\item[(5)] Let $\chi$ be nilpotent and $\lambda_1\in\mathbb{F}_p\backslash\{0\}$.
Then
$$
L_{\chi}(\lambda)_{\alpha}=\left\{\begin{array}{ll}\langle (\lambda_1,0,0)\mid (\lambda_1-1,1,0)\rangle, &\alpha=0\\
\langle (\lambda_1+1,0,0) \mid (\lambda_1,1,0)\rangle,  &\alpha=(-1,1)\\
\langle (\lambda_1-1,0,0)\mid (\lambda_1-2,1,0)\rangle,  &\alpha=(1,-1).
\end{array}\right.
$$

\item[(6)] Let $(\chi,\lambda)=(0,0)$. Then
$$
L_{\chi}(\lambda)_{\alpha}=\left\{\begin{array}{ll}\langle (0,0,0)\mid 0\rangle, &\alpha=0\\
0,  &\alpha=(-1,1),(1,-1).
\end{array}\right.
$$
\end{itemize}
\end{lemma}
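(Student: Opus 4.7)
The plan is to use the explicit maximal submodule $M_i$ given in Proposition \ref{200822839} together with the target-weight spaces of $Z_\chi(\lambda)$ from Lemma \ref{weightspace}(2)--(3). Since each $M_i$ is a weight submodule of $Z_\chi(\lambda)$, we have
$$L_\chi(\lambda)_\alpha = Z_\chi(\lambda)_\alpha \big/ \bigl(M_i \cap Z_\chi(\lambda)_\alpha\bigr)$$
for every target weight $\alpha \in \{0, (-1,1), (1,-1)\}$. So the computation reduces in each subcase to intersecting the listed basis of $M_i$ with the four-element basis of $Z_\chi(\lambda)_\alpha$ and tracking the resulting identifications.

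Part (6) is immediate from Proposition \ref{200822839}(2): in $Z_0(0)/M_3$ only $(0,0,0)$ is not killed, giving weight-$0$ space $\langle (0,0,0)\mid 0\rangle$ and zero at weights $(\pm 1,\mp 1)$. Part (5) uses $L_\chi(\lambda)=Z_\chi(\lambda)/M_2$ with the generators $(a,1,1)$ and $(a+1,0,1)-\lambda_1(a,1,0)$. Modulo $M_2$ we obtain $(a,1,1)\equiv 0$ and $(a+1,0,1)\equiv \lambda_1(a,1,0)$ for every $a$. Applied to the four generators of each $Z_\chi(\lambda)_\alpha$ in Lemma \ref{weightspace}(2), this kills the even generator $(\ast,1,1)$ and, using $\lambda_1\ne 0$, identifies the two odd generators into a single one, yielding the claimed $1\mid 1$ description at each target weight.

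For parts (1)--(4) with $\chi=0$, $L_0(\lambda)=Z_0(\lambda)/M_1$, and the analysis splits by $\lambda_1$. The generic interior range $2\le\lambda_1\le\frac{p-1}{2}$ (part (3)) mirrors part (5): the $c$-sweep of $M_1$ misses all four generators of each $Z_0(\lambda)_\alpha$, while the relations $(a,1,1)\equiv 0$, $(b+1,0,1)\equiv \lambda_1(b,1,0)$, and the isolated $(\underline{2\lambda_1},1,0)\equiv 0$ cut each target weight space down to $1\mid 1$. The boundary cases $\lambda_1=1$ (part (1)) and $\lambda_1=p-1$ (part (2)) require separate inspection because indices wrap around: for $\lambda_1=p-1$, for instance, $(\lambda_1+1,0,0)=(0,0,0)$ and some of the odd generators coincide with either $(\underline{2\lambda_1},1,0)$ or a $c$-range generator of $M_1$; in each case one substitutes directly to see which of the four generators of $Z_0(\lambda)_\alpha$ survive. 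In the wrap-around range $\frac{p+1}{2}\le\lambda_1\le p-2$ (part (4)), the $c$-sweep of $M_1$ grows large enough to absorb every generator of each $Z_0(\lambda)_\alpha$, so all three target weight spaces vanish.

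The only non-mechanical point is part (4): one must verify that for $\frac{p+1}{2}\le\lambda_1\le p-2$ the residues of $\lambda_1-2,\lambda_1-1,\lambda_1,\lambda_1+1$ modulo $p$ all lie in the interval $[\underline{2\lambda_1+1},p-1]$, so that every $(c,\ast,\ast)$-type generator appearing in the bases of Lemma \ref{weightspace}(2) is absorbed by $M_1$. Once this index check is done, everything else is direct substitution into the basis of $M_1$.
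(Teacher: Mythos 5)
Your proposal is correct and follows essentially the same route as the paper: the paper also reads off $L_\chi(\lambda)$ as $Z_\chi(\lambda)/M_i$ from Proposition \ref{200822839}, records the resulting relations $(d,1,1)=0$, $(b+1,0,1)=\lambda_1(b,1,0)$, $(c,\ast,\ast)=0$, and then settles parts (1)--(4) by comparing the residues $\underline{\lambda_1+x}$ for $x=0,\pm1,-2$ with $\underline{2\lambda_1}$, which is exactly your intersection of $M_i$ with the target-weight bases of Lemma \ref{weightspace}. One small imprecision in your part (4): at $\lambda_1=p-2$ one has $\underline{\lambda_1-2}=\underline{2\lambda_1}\notin[\underline{2\lambda_1+1},p-1]$, so the generator $(\lambda_1-2,1,0)$ is \emph{not} absorbed by the $c$-sweep as you assert; it is killed instead by the separate basis element $(2\lambda_1,1,0)$ of $M_1$ (the paper isolates $\lambda_1=p-2$ as its own subcase for precisely this reason), and with that correction the conclusion stands.
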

\begin{proof}
\textit{Case $\chi=0$}:
From Proposition \ref{200822839} (5), $L_{0}(\lambda)$
has a basis
$$\left\{(a,0,0),(2\lambda_1,0,0), (a,1,0),(0,0,1)\mid 0\leq a \leq\underline{ 2\lambda_1-1}\right\}$$
and the following equations hold in $L_{0}(\lambda)$:
\begin{align*}
(a+1,0,1)&=\lambda_1(a,1,0)\neq 0, \quad 0\leq a \leq \underline{2\lambda_1-1},\\
(b+1,0,1)&=\lambda_1(b,1,0)=0, \quad \underline{2\lambda_1} \leq b \leq p-2,\\
(p-1,1,0)&=(c,0,0)=(d,1,1)=0, \quad \underline{2\lambda_1+1} \leq c \leq p-1, 0\leq d\leq p-1.
\end{align*}
Then for $x=0, \pm 1, -2$, it is necessary to compare $\underline{\lambda_1+x}$ and $\underline{2\lambda_1}$ by Lemma \ref{weightspace} (2).
To that aim, we get the following conclusions:
\begin{itemize}
\item If $p\geq 5$ and  $2\leq\lambda_1\leq \frac{p-1}{2}$, then
$$\underline{\lambda_1-2}<\underline{\lambda_1-1}<\underline{\lambda_1}<\underline{\lambda_1+1}<\underline{2\lambda_1}.$$
\item If $p\geq 5$ and  $\lambda_1=1$, then
$$\underline{\lambda_1-1}<\underline{\lambda_1}<\underline{\lambda_1+1}=\underline{2\lambda_1}<\underline{\lambda_1-2}.$$
\item If $p\geq 5$ and  $\lambda_1=p-1$, then
$$\underline{\lambda_1+1}<\underline{\lambda_1-2}<\underline{\lambda_1-1}=\underline{2\lambda_1}<\underline{\lambda_1}.$$
\item If $p\geq 5$ and  $\lambda_1=p-2$, then
$$\underline{\lambda_1-2}=\underline{2\lambda_1}<\underline{\lambda_1-1}<\underline{\lambda_1}<\underline{\lambda_1+1}.$$
\item If $p\geq 7$ and  $\frac{p+1}{2}\leq\lambda_1\leq p-3$, then
$$\underline{2\lambda_1}<\underline{\lambda_1-2}<\underline{\lambda_1-1}<\underline{\lambda_1}<\underline{\lambda_1+1}.$$
\item If $p=3$ and  $\lambda_1=1$, then
$$\underline{\lambda_1-1}<\underline{\lambda_1}<\underline{\lambda_1+1}=\underline{2\lambda_1}=\underline{\lambda_1-2}.$$
\item If $p=3$ and  $\lambda_1=2$, then
$$\underline{\lambda_1+1}=\underline{\lambda_1-2}<\underline{\lambda_1-1}=\underline{2\lambda_1}<\underline{\lambda_1}.$$
\end{itemize}
Therefore, the conclusions on target-weight spaces of $L_{0}(\lambda)$ in (1)--(4) are true.

\textit{Case $\chi$ being nilpotent}:
From Proposition \ref{200822839} (5), $L_{\chi}(\lambda)$
has a basis
$$\left\{(a,0,0), (a,1,0)\mid 0\leq a \leq p-1\right\}$$
and the following equations hold in $L_{\chi}(\lambda)$
$$
(a+1,0,1)=\lambda_1(a,1,0)\neq 0, \quad(a,1,1)=0, \quad 0\leq a \leq p-1.
$$
Then (5) holds.
\end{proof}

\subsection{Weight-derivation spaces}
In this subsection, we determine all of weight-derivations from $\frak{g}$ to $ Z_{\chi}(\lambda)$ or $L_{\chi}(\lambda)$.

From Remark \ref{200823907} and Proposition \ref{200822839}, we get the following formulas
about $L_{\chi}(\lambda)$ as the $\frak{g}$-module in case
 $\lambda=(\lambda_1,-\lambda_1)\in \mathbb{F}_p^2\backslash\{0\}$,
 which will be used in the future and only need a direct computation:
\begin{align}
&H_1(0,0,1)=H_2(0,0,1)=\lambda_1(0,0,0)\label{001},\\
&F(a,j,0)=\delta_{j=0}\left(\delta_{\chi=0}\delta_{0\leq a\leq \underline{2\lambda_1-1}}+\delta_{\chi\neq 0}\right)(a,1,0)\label{Fl},
\end{align}
\begin{equation}
\begin{split}\label{a10l}
&H_i(a,1,0)=\left(\delta_{\chi=0}\delta_{0\leq a\leq \underline{2\lambda_1-1}}+\delta_{\chi\neq 0}\right)\delta_{a\neq p-1}(a+1,0,0)\\
&\;\;\quad\quad\quad\quad\quad+\delta_{\chi\neq 0}\delta_{a= p-1}\chi(f)^p(0,0,0),
\end{split}
\end{equation}
\begin{equation}
\begin{split}\label{a00l}
&H_i(a,0,0)=\left(\delta_{\chi=0}\delta_{1\leq a\leq \underline{2\lambda_1}}+\delta_{\chi\neq 0}\right)(-1)^i(a-\lambda_1)(a-1,1,0)\\
&\;\;\quad\quad\quad\quad\quad+\delta_{(a,\chi)=(0, 0)}(-1)^{i+1}(0,0,1),
\end{split}
\end{equation}
\begin{equation}
\begin{split}\label{El}
&E(a,j,0)=\delta_{j=0}\left(\delta_{\chi=0}\delta_{2\leq a\leq \underline{2\lambda_1}}+\delta_{\chi\neq 0}\right)a(2\lambda_1-a+1)(a-2,1,0)\\
&\quad\quad\quad\quad\quad+\delta_{(j,\chi,a)=(0,0,1)}2(0,0,1),
\end{split}
\end{equation}
\begin{equation}
\begin{split}\label{fl}
&f(a,j,0)=\delta_{a\neq p-1}\left(\delta_{\chi=0}\delta_{0\leq a< \underline{2\lambda_1-1}}+\delta_{j=0}\delta_{\chi=0}\delta_{a= \underline{2\lambda_1-1}}\right)(a+1,j,0)\\
&\quad\quad\quad\quad\quad+\delta_{\chi\neq 0}\delta_{a\neq p-1}(a+1,j,0)+\delta_{a= p-1}\delta_{\chi\neq 0}\chi(f)^p(0,0,0),
\end{split}
\end{equation}
\begin{equation}
\begin{split}\label{el}
&e(a,j,0)=a\left(2\lambda_1-a+(-1)^j\right)\left(\delta_{(\chi,k,a)=(0,0,\underline{2\lambda_1})}+\delta_{\chi\neq 0}\right)(a-1,j,0)\\
&\quad\quad\quad\quad\quad+a\left(2\lambda_1-a+(-1)^j\right)\delta_{\chi=0}\delta_{0\leq a\leq \underline{2\lambda_1-1}}(a-1,j,0)\\
&\quad\quad\quad\quad\quad+\delta_{(\chi,j,k)=(0,1,0)}\left(2\delta_{a\neq 0}\lambda_1(a-1,1,0)+2\delta_{a= 0}(0,0,1)\right)\\
&\quad\quad\quad\quad\quad+2\delta_{\chi\neq 0}\lambda_1(a-1,1,0).
\end{split}
\end{equation}

\begin{lemma}\label{1649}
\begin{itemize}
\item[(1)]
If $\chi$ is semisimple, then
$$\mathrm{Der}\left(\frak{g}, Z_{\chi}(\lambda)\right)_{(0)}
=0.$$

\item[(2)]
Let $\sigma\in \mathrm{Der}\left(\frak{g}, Z_{\chi}(\lambda)\right)_{(0)}$ and
$\tau\in \mathrm{Der}\left(\frak{g},L_{\chi}(\lambda)\right)_{(0)}$.
Then
$$\sigma(h_i)=\delta_{(\chi,\lambda)\neq (0,0)}\tau(h_i)=0, \quad i=1,2.$$

\item[(3)]
Let $\sigma\in \mathrm{Der}\left(\frak{g}, Z_{\chi}(\lambda)\right)_{(0)}$ and
$\tau\in \mathrm{Der}\left(\frak{g},L_{\chi}(\lambda)\right)_{(0),\bar{1}}$.
If
$$\mbox{$(i_1,j_1,k_1)\neq (\lambda_1,1,0), (i_2,j_2,k_2)\neq (\lambda_1,1,1)$ and $(i_3,j_3,k_3)\neq (\lambda_1-2,1,1)$,}$$
 then
\begin{align*}
&\delta_{(\chi,\lambda_1)\neq(0,1)}\tau(F)=\delta_{(\chi,\lambda_1)\neq(0,1)}\tau(E)=0,\\
&\delta_{(\chi,\lambda_1)=(0,1)}\left(2\tau(F)^{(\lambda_1+1,0,0)}+\tau(E)^{(\lambda_1-1,0,0)}\right)=0,\\
&\delta_{|\sigma|=\bar{0}}\sigma(F)^{(i_1,j_1,k_1)}=\delta_{|\sigma|=\bar{1}}\sigma(F)^{(i_2,j_2,k_2)}
=\delta_{|\sigma|=\bar{1}}\sigma(E)^{(i_3,j_3,k_3)}=0.
\end{align*}

\end{itemize}
\end{lemma}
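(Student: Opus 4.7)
The plan is to apply the derivation formula to three families of super-brackets in $\mathfrak{q}(2)$: the squares $[H_i, H_i] = 2h_i$, the null brackets $[e, E] = [f, F] = 0$, and the mixed brackets $[e, F] = H_1 - H_2$, $[f, E] = H_2 - H_1$. Combined with the weight-space data from Lemmas \ref{weightspace}--\ref{weightspacel} and the explicit action formulas in Remark \ref{200823907} and (\ref{Fl})--(\ref{el}), these three families should handle all three parts of the lemma.

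For part (1), I would exploit the $\Lambda_\chi$ condition $\lambda_i^p - \lambda_i = \chi(h_i)^p$: a nontrivial semisimple $\chi$ pushes $\lambda_1$ or $\lambda_2$ out of $\mathbb{F}_p$, which rules out the only configuration $\lambda_1 = -\lambda_2 \in \mathbb{F}_p$ in which the target weight spaces of $Z_\chi(\lambda)$ are nonzero; the weight-derivation $\sigma$ then vanishes on every generator of $\mathfrak{g}$. For part (2), applying the derivation formula to $[H_i, H_i] = 2h_i$ and dividing by $2$ (valid since $p > 2$) gives $\sigma(h_i) = (-1)^{|\sigma|} H_i \sigma(H_i)$. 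After listing the candidates for $\sigma(H_i)$ from Lemma \ref{weightspace}, the action formulas in Remark \ref{200823907} verify $H_i \sigma(H_i) = 0$ in every case. The same argument for $\tau$, now using Lemma \ref{weightspacel}, gives $\tau(h_i) = 0$ whenever $L_\chi(\lambda)_0 = 0$, which is precisely the condition $(\chi, \lambda) \neq (0, 0)$.

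For part (3), the null bracket $[f, F] = 0$ yields $f\sigma(F) = (-1)^{|\sigma|} F\sigma(f)$. Expanding both sides in the basis of $Z_\chi(\lambda)_{(-2, 2)}$ via (\ref{f}) and (\ref{F}) and matching coefficients immediately kills $\sigma(F)^{(\lambda_1+1, 0, 1)}$ (for $\sigma$ even) and $\sigma(F)^{(\lambda_1+1, 0, 0)}$ (for $\sigma$ odd). The parallel identity $[e, E] = 0$ handles $\sigma(E)^{(\lambda_1-1, 0, 0)}$ for $\sigma$ odd; residual edge cases ($\lambda_1 \in \{1, p-2\}$, where $(\lambda_1-1)(\lambda_1+2) \equiv 0 \pmod{p}$) are dispatched by the mixed brackets $[e, F]$ and $[f, E]$, fed with the constraint $\sigma(h_i) = 0$ from part (2). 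For $\tau$ the same template produces $\tau(F) = \tau(E) = 0$ except at $(\chi, \lambda_1) = (0, 1)$: there the identities $f(\lambda_1+1, 0, 0) = F(\lambda_1+1, 0, 0) = 0$ in $L_0(\lambda)$ rob the null-bracket argument of its content, and the mixed brackets together with $H_i \tau(H_i) = 0$ leave behind only the single linear relation $2\tau(F)^{(\lambda_1+1, 0, 0)} + \tau(E)^{(\lambda_1-1, 0, 0)} = 0$.

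The hardest part will be the sheer volume of case-by-case bookkeeping imposed by the $\delta$-factors in (\ref{Fl})--(\ref{el}): for each triple $(\chi, \lambda, |\sigma|)$ or $(\chi, \lambda, |\tau|)$ the $\mathfrak{g}$-action on the relevant basis vectors must be re-evaluated, and it is the collapse of the null-bracket argument at $(\chi, \lambda_1) = (0, 1)$ that isolates the lemma's exceptional $\delta$-term.
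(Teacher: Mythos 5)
Your part (1) is exactly the paper's argument (the $\Lambda_\chi$ condition forces $\lambda_i\notin\mathbb{F}_p$, so Lemma \ref{weightspace}(1) kills all target-weight spaces). The trouble starts with part (2). The identity $\sigma(h_i)=(-1)^{|\sigma|}H_i\sigma(H_i)$ from $[H_i,H_i]=2h_i$ is correct, but your plan to finish by checking that $H_i$ annihilates every candidate for $\sigma(H_i)$ in the weight-$0$ space is false as a computation: for $\lambda=(\lambda_1,-\lambda_1)\in\mathbb{F}_p^2\setminus\{0\}$ and $|\sigma|=\bar{0}$ the candidates span $\langle(\lambda_1-1,1,0),(\lambda_1,0,1)\rangle$, and (\ref{a1k}) gives $H_i(\lambda_1-1,1,0)=(-1)^i(\lambda_1-1,1,1)+(\lambda_1,0,0)\neq0$; likewise for $\lambda=0$ and nilpotent $\chi$, (\ref{a10}) gives $H_i(p-1,1,0)=\chi(f)^p(0,0,0)\neq0$. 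So $H_i\sigma(H_i)=0$ is a constraint that must be \emph{derived}, not a identity that holds for all candidates; arguing "$H_i\sigma(H_i)=0$, hence $\sigma(h_i)=0$" is circular here. The paper's route is the last assertion of Lemma \ref{reduction}: $\sigma(h_i)\in\mathrm{H}^0(\frak{g},Z_{\chi}(\lambda))$, which by (\ref{0h}) vanishes unless $(\chi,\lambda)=(0,0)$; in that one remaining case $\mathrm{H}^0$ is purely odd, so only $|\sigma|=\bar{1}$ survives, and there $\sigma(H_i)$ is a multiple of $(0,0,0)$ with $H_i(0,0,0)=0$ by (\ref{a00}). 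Your companion claim that $L_{\chi}(\lambda)_0=0$ precisely when $(\chi,\lambda)\neq(0,0)$ is also wrong (see Lemma \ref{weightspacel}(1),(3),(5)); what vanishes for $(\chi,\lambda)\neq(0,0)$ is $\mathrm{H}^0(\frak{g},L_{\chi}(\lambda))$, because $L_{\chi}(\lambda)$ is then a nontrivial simple module.

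In part (3) your bracket selection does not deliver the stated relations. The coupling $2\tau(F)^{(\lambda_1+1,0,0)}+\tau(E)^{(\lambda_1-1,0,0)}=0$ at $(\chi,\lambda_1)=(0,1)$ comes in the paper from $[E,F]=h_1+h_2$ together with part (2), which turns the derivation identity into $E\tau(F)+F\tau(E)=0$ and directly ties the two coefficients together. Your mixed brackets $[e,F]=H_1-H_2$ and $[f,E]=H_2-H_1$ instead produce equations whose right-hand sides are $\pm\left(\tau(H_1)-\tau(H_2)\right)$ and whose left-hand sides involve the additional unknowns $\tau(e)$, $\tau(f)$; the constraint $\tau(h_i)=0$ from part (2) has no bearing on them, so they cannot by themselves isolate that linear relation. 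Similarly, the null brackets $[f,F]=0$ and $[e,E]=0$ are strictly weaker than the paper's $[F,F]=0$ and $[E,E]=0$: for example with $\chi=0$ and $\lambda_1=p-2$ one has $f(\lambda_1+1,0,1)=f(p-1,0,1)=0$, so $[f,F]$ says nothing about $\sigma(F)^{(\lambda_1+1,0,1)}$, whereas $F(\lambda_1+1,0,1)=(\lambda_1+1,1,1)$ is always a basis vector of $Z_{\chi}(\lambda)$. The overall template (derivation identity on chosen brackets plus weight-space bookkeeping) is the right one, but to close the argument you need Lemma \ref{reduction} together with the $\mathrm{H}^0$ computation for part (2), and the brackets $[F,F]$, $[E,E]$, $[E,F]$ for part (3).
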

\begin{proof}
(1) Let $\chi$ be semisimple.
There exists $h_i$ such that $\chi(h_i)\neq 0$.
Since $\lambda\in \Lambda_{\chi}$, that is, $\lambda_i^p-\lambda_i=\chi(h_i)^p$,
we get $\lambda_i\notin \mathbb{F}_p$.
From Lemma \ref{weightspace} (1),
$$\mathrm{Hom}_{\mathbb{F}}\left(\frak{g}, Z_{\chi}(\lambda)\right)_{(0)}
=0.$$
It follows that
$$\mathrm{Der}\left(\frak{g}, Z_{\chi}(\lambda)\right)_{(0)}
=0.$$

(2)
From Lemma \ref{reduction}, (\ref{0h}) and Lemma \ref{weightspace}, it is sufficient to show
$$\delta_{|\sigma|=\bar{1}}\sigma(h_i)^{(p-1,1,0)}=0,\quad  i=1,\;2$$
in case $(\chi, \lambda)=(0,0)$.
Let $(\lambda,\chi)=(0,0)$ and $|\sigma|=\bar{1}$.
By the definition of derivations,
$$2\sigma(h_i)=\sigma([H_i,H_i])=-2H_i\sigma(H_i).$$
Then
$$-\sigma(h_i)^{(p-1,1,0)}(p-1,1,0)=\sigma(H_i)^{(0,0,0)}H_{i}(0,0,0)\stackrel{(\ref{a00})}{=}0.$$
As a result,  $\sigma(h_i)^{(p-1,1,0)}=0$ in case $(\lambda,\chi)=(0,0)$ and $|\sigma|=\bar{1}$.

(3)
From Lemmas \ref{weightspace} and \ref{weightspacel}, it is sufficient to show
\begin{align*}
&\delta_{|\sigma|=\bar{0}}\delta_{\lambda_1\neq 0}\sigma(F)^{(\lambda_1+1,0,1)}=\delta_{|\sigma|=\bar{1}}\sigma(F)^{(\lambda_1+1,0,0)}=
\delta_{|\sigma|=\bar{1}}\sigma(E)^{(\lambda_1-1,0,0)}=0,\\
&\delta_{(\chi,\lambda_1)\neq(0,1)}\tau(F)^{(\lambda_1+1,0,0)}
=\delta_{(\chi,\lambda_1)\neq(0,1)}\tau(E)^{(\lambda_1-1,0,0)}=0,\\
&\delta_{(\chi,\lambda_1)=(0,1)}\left(2\tau(F)^{(\lambda_1+1,0,0)}+\tau(E)^{(\lambda_1-1,0,0)}\right)=0,
\end{align*}
in case $\lambda_1=-\lambda_2\in \mathbb{F}_p$.
Let $\lambda_1=-\lambda_2\in \mathbb{F}_p$ and $|\tau|=\bar{1}$.
By the definition of weight-derivations and $[F,F]=0$,
\begin{align*}
0=F\sigma(F)&=\delta_{|\sigma|=\bar{0}}\left(\delta_{\lambda_1\neq0}\sigma(F)^{(\lambda_1+1,0,1)}F(\lambda_1+1,0,1)
+\sigma(F)^{(\lambda_1,1,0)}F(\lambda_1,1,0)\right)\\
&\quad\;+\delta_{|\sigma|=\bar{1}}\left(\delta_{\lambda_1\neq0}\sigma(F)^{(\lambda_1,1,1)}F(\lambda_1,1,1)
+\sigma(F)^{(\lambda_1+1,0,0)}F(\lambda_1+1,0,0)\right)\\
&\stackrel{(\ref{F})}{=}\delta_{|\sigma|=\bar{0}}\delta_{\lambda_1\neq0}\sigma(F)^{(\lambda_1+1,0,1)}(\lambda_1+1,1,1)
+\delta_{|\sigma|=\bar{1}}\sigma(F)^{(\lambda_1+1,0,0)}(\lambda_1+1,1,0),
\end{align*}
$$0=F\tau(F)\stackrel{(\ref{Fl})}{=}\tau(F)^{(\lambda_1+1,0,0)}
(\lambda_1+1,1,0).$$
Hence
$$
\left(\delta_{\chi=0}\delta_{\lambda_1\neq \pm 1}+\delta_{P_1}\right)\tau(F)^{(\lambda_1+1,0,0)}=0,$$
$$\delta_{|\sigma|=\bar{0}}\delta_{\lambda_1\neq 0}\sigma(F)^{(\lambda_1+1,0,1)}=\delta_{|\sigma|=\bar{1}}\sigma(F)^{(\lambda_1+1,0,0)}=0,$$
where the first equation is from
\begin{align*}
(\lambda_1+1,1,0)\neq 0\;\mbox{in}\; L_{\chi}(\lambda)\Longleftrightarrow &
\chi=0\;\mbox{and}\;\lambda_1\notin \{1,p-1\},\\
&\mbox{or }\;\chi \;\mbox{is nilpotent}.
\end{align*}

For convenience, hereafter denote by
$P_1$ the position that $\chi$ is nilpotent;
$P_4$ the one that $2\leq\lambda_1\leq \frac{p-1}{2}$.

Let $|\sigma|=|\tau|=\bar{1}$ in the following.
Then by $[E,F]=h_1+h_2$ and (2),
\begin{align*}
0=-\sigma([E,F])=&\delta_{\lambda_1\neq0}\sigma(F)^{(\lambda_1,1,1)}E(\lambda_1,1,1)\\
&+\delta_{\lambda_1\neq0}\sigma(E)^{(\lambda_1-2,1,1)}F(\lambda_1-2,1,1)\\
&+\sigma(E)^{(\lambda_1-1,0,0)}F(\lambda_1-1,0,0)\\
\stackrel{(\ref{F}),(\ref{E})}{=}&\sigma(E)^{(\lambda_1-1,0,0)}F(\lambda_1-1,1,0),\\
0=-\tau([E,F])=&
\left(\delta_{(\chi,\lambda_1)=(0,1)}+\delta_{(\chi,\lambda_1)=(0,p-1)}\right)2\tau(F)^{(\lambda_1+1,0,0)}E(\lambda_1+1,0,0)\\
&+\tau(E)^{(\lambda_1-1,0,0)}F(\lambda_1-1,0,0)\\
\stackrel{(\ref{Fl}),(\ref{El})}{=}&\delta_{(\chi,\lambda_1)=(0,1)}\left(2\tau(F)^{(\lambda_1+1,0,0)}+\tau(E)^{(\lambda_1-1,0,0)}\right)(\lambda_1-1,1,0)\\
&+\delta_{\chi=0}\delta_{P_4}\tau(E)^{(\lambda_1-1,0,0)}(\lambda_1-1,1,0)\\
&+\delta_{P_1}\tau(E)^{(\lambda_1-1,0,0)}(\lambda_1-1,1,0),
\end{align*}
where the last equation is from
\begin{align*}
(\lambda_1-1,1,0)\neq 0\;\mbox{in}\; L_{\chi}(\lambda)\Longleftrightarrow &
\chi=0\;\mbox{and}\;1\leq \lambda_1\leq \frac{p-1}{2},\\
&\mbox{or }\;\chi \;\mbox{is nilpotent}.
\end{align*}
It follows that
\begin{align*}
&\delta_{|\sigma|=\bar{1}}\sigma(E)^{(\lambda_1-1,0,0)}=0,\\
&\delta_{(\chi,\lambda_1)=(0,1)}\left(2\tau(F)^{(\lambda_1+1,0,0)}+\tau(E)^{(\lambda_1-1,0,0)}\right)=0,\\
&\delta_{\chi=0}\delta_{P_4}\tau(E)^{(\lambda_1-1,0,0)}=0,\\
&\delta_{P_1}\tau(E)^{(\lambda_1-1,0,0)}=0.
\end{align*}
It remains to show
$\delta_{(\chi,\lambda_1)=(0,p-1)}\tau(E)^{(\lambda_1-1,0,0)}=0$.
By $[E,E]=0$ and the definition of derivations,
\begin{align*}
0=E\tau(E)=&\tau(E)^{(\lambda_1-1,0,0)}E(\lambda_1-1,0,0)\left(\delta_{(\chi,\lambda_1)=(0, p-1)}+\delta_{(\chi,\lambda_1)=(0, 1)}\right)\\
\stackrel{(\ref{El})}{=}&\delta_{(\chi,\lambda_1)=(0, p-1)}\tau(E)^{(\lambda_1-1,0,0)}\left(\delta_{p=3}2(0,0,1)-\delta_{p\geq 5}2(\lambda_1-3,1,0)\right).
\end{align*}
It follows that
$$\delta_{(\chi,\lambda_1)=(0, p-1)}\tau(E)^{(\lambda_1-1,0,0)}=0.$$
\end{proof}

For convenience, define two linear maps from $\frak{g}$ to $Z_{\chi}(\lambda)$ as follows:
\begin{itemize}
\item[(1)] Define $\varphi\in \mathrm{Hom}_{\mathbb{F}}(\frak{g}, Z_{\chi}(\lambda))$ by
$$\varphi(H_1)=\varphi(H_2)=(p-1,1,0),\quad \varphi(x)=0, $$
where $x=e, E, F, f, h_1, h_2.$

\item[(2)] Define $\psi\in \mathrm{Hom}_{\mathbb{F}}(\frak{g}, Z_{\chi}(\lambda))$ by
$$\psi(H_1)=\psi(H_2)=-(0,0,0),\quad \psi(f)=(0,1,0),\quad \psi(x)=0,$$
where $x=e, E, F, h_1, h_2.$
\end{itemize}

In the following, we give a proof of the second part of Theorem \ref{Th1}.

\textit{The proof of Formula (\ref{1Z}) in Theorem \ref{Th1}}:
Claim that
$$
\mathrm{Der}(\frak{g}, Z_{\chi}(\lambda))_{(0)}=\left\{\begin{array}{ll}
\langle \frak{D}_{(\lambda_1-1,1,1)}, \frak{D}_{(\lambda_1,0,0)}\mid\frak{D}_{(\lambda_1-1,1,0)}, \frak{D}_{(\lambda_1,0,1)}\rangle, &\lambda=(\lambda_1,-\lambda_1)\in\mathbb{F}_p^2\backslash\{0\}\\
\langle \frak{D}_{(0,0,0)}, \varphi\mid \psi\rangle, &(\lambda,\chi)=(0,0)\\
\langle \frak{D}_{(0,0,0)}\mid \frak{D}_{(p-1,1,0)}\rangle, &\lambda=0, \chi\;\mbox{is nilpotent}\\
0, & \mbox{otherwise}.
\end{array}\right.
$$

It is routine to show that $\varphi, \phi$ are weight-derivations if $(\lambda,\chi)=(0,0)$.
It is a standard fact that the space
$\mathrm{Ider}(\frak{g}, Z_{\chi}(\lambda))_{(0)}=\langle\frak{D}_{(0,0,0)}\mid 0\rangle$ in case $(\lambda,\chi)=(0,0)$.
which implies that the derivations $\varphi, \psi$ are not inner.

Firstly we show that the elements are linearly independent in the right sets of the above claim.

\textit{Case $\lambda=(\lambda_1,-\lambda_1)\in \mathbb{F}_p^2\backslash\{0\}$}:
On one hand, let
$$a\frak{D}_{(\lambda_1-1,1,1)}+b\frak{D}_{(\lambda_1,0,0)}=0=c\frak{D}_{(\lambda_1,0,1)}+d\frak{D}_{(\lambda_1-1,1,0)},$$
where $a,b,c,d\in \mathbb{F}$. For convenience, we write
$$\sigma=a\frak{D}_{(\lambda_1-1,1,1)}+b\frak{D}_{(\lambda_1,0,0)},\quad \tau=c\frak{D}_{(\lambda_1,0,1)}+d\frak{D}_{(\lambda_1-1,1,0)}.$$
Then
\begin{align*}
&0=\sigma(F)=aF(\lambda_1-1,1,1)+bF(\lambda_1,0,0)\stackrel{(\ref{F})}{=}b(\lambda_1,1,0),\\
&0=\tau(F)=cF(\lambda_1,0,1)+dF(\lambda_1-1,1,0)\stackrel{(\ref{F})}{=}c(\lambda_1,1,1).
\end{align*}
which implies $b=0=c$.
Furthermore,
\begin{align*}
&0=\sigma(f)=af(\lambda_1-1,1,1)\stackrel{(\ref{f})}{=}a(\lambda_1,1,1),\\
&0=\tau(f)=df(\lambda_1-1,1,0)\stackrel{(\ref{F})}{=}d(\lambda_1,1,0).
\end{align*}
which implies $a=0=d$.
Then
$$\mbox{$\left\{\frak{D}_{(\lambda_1-1,1,1)}, \frak{D}_{(\lambda_1,0,0)}\right\}$ or $\left\{\frak{D}_{(\lambda_1,0,1)}, \frak{D}_{(\lambda_1-1,1,0)}\right\}$}$$
is linearly independent in case $\lambda=(\lambda_1,-\lambda_1)\in \mathbb{F}_p^2\backslash\{0\}$.

\textit{Case $\lambda=0$}:
On one hand, it is routine to show that $\varphi$ and $\psi$ are weight-derivations.
It is obvious that $\varphi,\psi$ and $\frak{D}_{(0,0,0)}$ are linearly independent in case $(\lambda,\chi)=(0,0)$,
so are $\psi=\frak{D}_{(p-1,1,0)}$ and $\varphi=\frak{D}_{(0,0,0)}$ in case $\lambda=0$ and $\chi$ is nilpotent.

Secondly we prove that weight-derivations must be in the right sets of the above claim
in case  $\lambda=(\lambda_1, -\lambda_1)\in \mathbb{F}_p^2$.

Let $\sigma\in \mathrm{Der}(\frak{g},  Z_{\chi}(\lambda))_{(0),\bar{0}}$.
On one hand, by the definition of derivations, Lemmas \ref{weightspace} and \ref{1649} (3), we get the following equations:
\begin{align*}
\sigma([H_i, F])=&\sigma(F)^{(\lambda_1,1,0)}H_i(\lambda_1,1,0)+\delta_{\lambda_1\neq 0}\sigma(H_i)^{(\lambda_1,0,1)}F(\lambda_1,0,1)\\
&+\sigma(H_i)^{(\lambda_1-1,1,0)}F(\lambda_1-1,1,0)\\
\stackrel{(\ref{F}),(\ref{a1k}),(\ref{a10})}{=}&\delta_{\lambda_1\neq 0}\left((-1)^i\sigma(F)^{(\lambda_1,1,0)}+\sigma(H_i)^{(\lambda_1,0,1)}\right)(\lambda_1,1,1)\\
&+\delta_{\lambda_1\neq p-1}\sigma(F)^{(\lambda_1,1,0)}(\lambda_1+1,0,0)+\delta_{\lambda_1=p-1}\sigma(F)^{(\lambda_1,1,0)}\chi(f)^p(0,0,0),
\end{align*}
\begin{align*}
\sigma([e,H_i])=&\delta_{\lambda_1\neq 0}\left(\sigma(H_i)^{(\lambda_1,0,1)}e(\lambda_1,0,1)-\sigma(e)^{(\lambda_1-2,1,1)}H_i(\lambda_1-2,1,1)\right)\\
&+\sigma(H_i)^{(\lambda_1-1,1,0)}e(\lambda_1-1,1,0)-\sigma(e)^{(\lambda_1-1,0,0)}H_i(\lambda_1-1,0,0)\\
\stackrel{(\ref{e})-(\ref{a0k}),(\ref{e0}),(\ref{a10})}{=}&\delta_{\lambda_1\neq 0}\left(\sigma(H_i)^{(\lambda_1,0,1)}\lambda_1(\lambda_1+1)+2\sigma(H_i)^{(\lambda_1-1,1,0)}\right)(\lambda_1-1,0,1)   \\
&+\delta_{\lambda_1\neq 0}\left(-\delta_{\lambda_1\neq 1}\sigma(e)^{(\lambda_1-2,1,1)}+(-1)^i\sigma(e)^{(\lambda_1-1,0,0)}\right)(\lambda_1-1,0,1)\\
&+\delta_{\lambda_1\neq 0}\left(\sigma(H_i)^{(\lambda_1-1,1,0)}\lambda_1(\lambda_1-1)+\sigma(e)^{(\lambda_1-2,1,1)}\lambda_1\right)(\lambda_1-2,1,0)\\
&+\delta_{\lambda_1\neq 0}\left(-(-1)^i\sigma(e)^{(\lambda_1-1,0,0)}(\lambda_1-1)\right)(\lambda_1-2,1,0)\\
&-\delta_{\lambda_1=1}\sigma(e)^{(\lambda_1-2,1,1)}\chi(f)^p(0,0,1)+\delta_{\lambda_1=0}(-1)^i\sigma(e)^{(\lambda_1-1,0,0)}(p-2,1,0),
\end{align*}
\begin{align*}
\sigma([e,f])=&\delta_{\lambda_1\neq 0}\left(\sigma(f)^{(\lambda_1,1,1)}e(\lambda_1,1,1)-\sigma(e)^{(\lambda_1-2,1,1)}f(\lambda_1-2,1,1)\right)\\
&+\sigma(f)^{(\lambda_1+1,0,0)}e(\lambda_1+1,0,0)-\sigma(e)^{(\lambda_1-1,0,0)}f(\lambda_1-1,0,0)\\
\stackrel{(\ref{f}),(\ref{e})}{=}&\delta_{\lambda_1\neq 0}\left(\sigma(f)^{(\lambda_1,1,1)}\lambda_1(\lambda_1-1)-\delta_{\lambda_1\neq 1}\sigma(e)^{(\lambda_1-2,1,1)}\right)(\lambda_1-1,1,1)\\
&+\delta_{\lambda_1\neq 0}\left(\sigma(f)^{(\lambda_1+1,0,0)}\lambda_1(\lambda_1+1)-\sigma(e)^{(\lambda_1-1,0,0)}\right)(\lambda_1,0,0)\\
&-\delta_{\lambda_1=1}\sigma(e)^{(\lambda_1-2,1,1)}\chi(f)^p(0,1,1)-2\delta_{\lambda_1=0}\sigma(e)^{(\lambda_1-1,0,0)}\chi(f)^p(0,0,0),
\end{align*}
\begin{align*}
\sigma([f,E])=&\delta_{\lambda_1\neq 0}\left(\sigma(f)^{(\lambda_1,1,1)}E(\lambda_1,1,1)+\sigma(E)^{(\lambda_1-1,0,1)}f(\lambda_1-1,0,1)\right)\\
&-\sigma(f)^{(\lambda_1+1,0,0)}E(\lambda_1+1,0,0)+\sigma(E)^{(\lambda_1-2,1,0)}f(\lambda_1-2,1,0)\\
\stackrel{(\ref{f}),(\ref{E})}{=}&\delta_{\lambda_1\neq 0}\left(\sigma(E)^{(\lambda_1-1,0,1)}-2\sigma(f)^{(\lambda_1+1,0,0)}(\lambda_1+1)\right)(\lambda_1,0,1)   \\
&+\delta_{\lambda_1\neq 0}\left(\delta_{\lambda_1\neq 1}\sigma(E)^{(\lambda_1-2,1,0)}+\sigma(f)^{(\lambda_1+1,0,0)}\lambda_1(\lambda_1+1)\right)(\lambda_1-1,1,0)\\
&+\delta_{\lambda_1=1}\sigma(E)^{(\lambda_1-2,1,0)}\chi(f)^p(0,1,0)+\delta_{\lambda_1=0}\sigma(E)^{(\lambda_1-2,1,0)}(p-1,1,0),
\end{align*}
\begin{align*}
\sigma([H_i,H_i])=&2\delta_{\lambda_1\neq 0}\sigma(H_i)^{(\lambda_1,0,1)}H_i(\lambda_1,0,1)+2\sigma(H_i)^{(\lambda_1-1,1,0)}H_i(\lambda_1-1,1,0)\\
\stackrel{(\ref{a1k}),(\ref{a0k}),(\ref{a10})}{=}&2\delta_{\lambda_1\neq 0}\left(\sigma(H_i)^{(\lambda_1,0,1)}\lambda_1+\sigma(H_i)^{(\lambda_1-1,1,0)}\right)\left((-1)^i(\lambda_1-1,1,1)+ (\lambda_1,0,0)\right)  \\
&+2\delta_{\lambda_1=0}\sigma(H_i)^{(\lambda_1-1,1,0)}\chi(f)^p(0,0,0).
\end{align*}
On the other hand, from the multiplication of $\frak{g}$ and Remark \ref{200823907}, we have the following equations:
\begin{align*}
\sigma([H_i,H_i])&=2\sigma(h_i)=0, \quad \sigma([e,f])=\sigma(h_1-h_2)=0,\\
 \sigma([H_i, F])&=\sigma(f)=\delta_{\lambda_1\neq 0}\sigma(f)^{(\lambda_1,1,1)}(\lambda_1,1,1)+\sigma(f)^{(\lambda_1+1,0,0)}(\lambda_1+1,0,0),\\
\sigma([e,H_i])&=(-1)^i\sigma(E)=(-1)^i\delta_{\lambda_1\neq 0}\sigma(E)^{(\lambda_1-1,0,1)}(\lambda_1-1,0,1)\\
&\quad+(-1)^i\sigma(E)^{(\lambda_1-2,1,0)}(\lambda_1-2,1,0),\\
\sigma([f,E])&=\sigma(H_2-H_1)=\delta_{\lambda_1\neq 0}\left(\sigma(H_2)^{(\lambda_1,0,1)}-\sigma(H_1)^{(\lambda_1,0,1)}\right)(\lambda_1,0,1)\\
 &\quad+\left(\sigma(H_2)^{(\lambda_1-1,1,0)}-\sigma(H_1)^{(\lambda_1-1,1,0)}\right)(\lambda_1-1,1,0),
\end{align*}
where the first two equations are true from Lemma \ref{1649} (2).
Then each even weight-derivation is in the right set of the above claim.

Let $\sigma\in \mathrm{Der}(\frak{g},  Z_{\chi}(\lambda))_{(0),\bar{1}}$.
On one hand, by the definition of derivations and Lemma \ref{weightspace}, we get the following equations:
\begin{align*}
-\sigma([H_i, F])=&\delta_{\lambda_1\neq 0}\sigma(F)^{(\lambda_1,1,1)}H_i(\lambda_1,1,1)+\sigma(H_i)^{(\lambda_1,0,0)}F(\lambda_1,0,0)\\
&+\delta_{\lambda_1\neq 0}\sigma(H_i)^{(\lambda_1-1,1,1)}F(\lambda_1-1,1,1)\\
\stackrel{(\ref{F}),(\ref{a1k})}{=}&\delta_{\lambda_1\neq 0}\left(-\lambda_1\sigma(F)^{(\lambda_1,1,1)}+\sigma(H_i)^{(\lambda_1,0,0)}\right)(\lambda_1,1,0)\\
&+\delta_{\lambda_1\neq 0}\delta_{\lambda_1\neq p-1}\sigma(F)^{(\lambda_1,1,1)}(\lambda_1+1,0,1)\\
&+\delta_{\lambda_1=0}\sigma(H_i)^{(0,0,0)}(0,1,0)+\delta_{\lambda_1=p-1}\sigma(F)^{(p-1,1,1)}\chi(f)^p(0,0,1),
\end{align*}
\begin{align*}
\sigma([e,H_i])=&\delta_{\lambda_1\neq 0}\left(\sigma(H_i)^{(\lambda_1-1,1,1)}e(\lambda_1-1,1,1)+\sigma(e)^{(\lambda_1-1,0,1)}H_i(\lambda_1-1,0,1)\right)\\
&+\sigma(H_i)^{(\lambda_1,0,0)}e(\lambda_1,0,0)+\sigma(e)^{(\lambda_1-2,1,0)}H_i(\lambda_1-2,1,0)\\
\stackrel{(\ref{e})-(\ref{a0k}),(\ref{e0}),(\ref{a10})}{=}&\delta_{\lambda_1\neq 0}\left(\sigma(H_i)^{(\lambda_1-1,1,1)}\lambda_1(\lambda_1-1)
+(-1)^i\sigma(e)^{(\lambda_1-2,1,0)}\right)(\lambda_1-2,1,1)   \\
&+\delta_{\lambda_1\neq 0}(-1)^i(\lambda_1-1)\sigma(e)^{(\lambda_1-1,0,1)}(\lambda_1-2,1,1) \\
&+\delta_{\lambda_1\neq 0}\left(\sigma(e)^{(\lambda_1-1,0,1)}\lambda_1+\sigma(H_i)^{(\lambda_1,0,0)}\lambda_1(\lambda_1+1)\right)(\lambda_1-1,0,0)\\
&+\delta_{\lambda_1\neq 0}\delta_{\lambda_1\neq 1}\sigma(e)^{(\lambda_1-2,1,0)}(\lambda_1-1,0,0)\\
&+\delta_{\lambda_1= 1}\sigma(e)^{(p-1,1,0)}\chi(f)^p(0,0,0)+\delta_{\lambda_1= 0}\sigma(e)^{(p-2,1,0)}(p-1,0,0),
\end{align*}
\begin{align*}
\sigma([e,f])=&\delta_{\lambda_1\neq 0}\left(\sigma(f)^{(\lambda_1+1,0,1)}e(\lambda_1+1,0,1)-\sigma(e)^{(\lambda_1-1,0,1)}f(\lambda_1-1,0,1)\right)\\
&+\sigma(f)^{(\lambda_1,1,0)}e(\lambda_1,1,0)-\sigma(e)^{(\lambda_1-2,1,0)}f(\lambda_1-2,1,0)\\
\stackrel{(\ref{f}),(\ref{e}),(\ref{e0})}{=}&\delta_{\lambda_1\neq 0}\left(\sigma(f)^{(\lambda_1+1,0,1)}\lambda_1(\lambda_1+1)+2\sigma(f)^{(\lambda_1,1,0)}-\sigma(e)^{(\lambda_1-1,0,1)}\right)(\lambda_1,0,1)\\
&-\delta_{\lambda_1\neq 1}\sigma(e)^{(\lambda_1-2,1,0)}(\lambda_1-1,1,0)-\delta_{\lambda_1= 1}\sigma(e)^{(p-1,1,0)}\chi(f)^p(0,1,0)\\
&+\delta_{\lambda_1\neq 0}\sigma(f)^{(\lambda_1,1,0)}\lambda_1(\lambda_1-1)(\lambda_1-1,1,0),
\end{align*}
\begin{align*}
\sigma([f,H_i])=&\delta_{\lambda_1\neq 0}\left(\sigma(H_i)^{(\lambda_1-1,1,1)}f(\lambda_1-1,1,1)+\sigma(f)^{(\lambda_1+1,0,1)}H_i(\lambda_1+-1,0,1)\right)\\
&+\sigma(H_i)^{(\lambda_1,0,0)}f(\lambda_1,0,0)+\sigma(f)^{(\lambda_1,1,0)}H_i(\lambda_1,1,0)\\
\stackrel{(\ref{f})-(\ref{a0k}),(\ref{a10})}{=}&\delta_{\lambda_1\neq 0}\left(
(\lambda_1+1)\sigma(f)^{(\lambda_1+1,0,1)}+\sigma(f)^{(\lambda_1,1,0)}\right)(-1)^i(\lambda_1,1,1)   \\
&+\delta_{\lambda_1\neq 0}\lambda_1\sigma(f)^{(\lambda_1+1,0,1)}(\lambda_1+1,0,0)
+\delta_{\lambda_1\neq 0}\sigma(H_i)^{(\lambda_1-1,1,1)}(\lambda_1,1,1) \\
&+\delta_{\lambda_1\neq p-1}\left(\sigma(f)^{(\lambda_1,1,0)}+\sigma(H_i)^{(\lambda_1,0,0)}\right)(\lambda_1+1,0,0)\\
&+\delta_{\lambda_1=p-1}\left(\sigma(H_i)^{(p-1,0,0)}+\sigma(f)^{(p-1,1,0)}\right)\chi(f)^p(0,0,0),
\end{align*}
where the first equation is from Lemma \ref{1649} (3).
On the other hand, from Lemmas \ref{weightspace} and \ref{1649} (2), we have the following equations:
\begin{align*}
\sigma([e,f])&=\sigma(h_1-h_2)=0,\\
\sigma([e,H_i])&=(-1)^i\sigma(E)=(-1)^i\delta_{\lambda_1\neq 0}\sigma(E)^{(\lambda_1-2,1,1)}(\lambda_1-2,1,1),\\
\sigma([f,H_i])&=(-1)^{i+1}\sigma(F)=(-1)^{i+1}\delta_{\lambda_1\neq 0}\sigma(F)^{(\lambda_1,1,1)}(\lambda_1,1,1),\\
 \sigma([H_i, F])&=\sigma(f)=\delta_{\lambda_1\neq 0}\sigma(f)^{(\lambda_1+1,0,1)}(\lambda_1+1,0,1)+\sigma(f)^{(\lambda_1,1,0)}(\lambda_1,1,0),
\end{align*}
where the first three equations are true from Lemma \ref{1649}.
Then each even weight-derivation is in the right set of the above claim.

It follows that the above claim is true. As a result Formula (\ref{1Z}) holds.

For convenience, define some linear maps from $\frak{g}$ to $L_{\chi}(\lambda)$ as follows:
\begin{itemize}
\item[(1)] For $i=1,2$, define $\varphi_i\in \mathrm{Hom}_{\mathbb{F}}(\frak{g}, L_{\chi}(\lambda))$ by
$$\varphi_i(h_i)=(0,0,0),\quad \varphi_i(x)=0,$$
where  $x=e, E, F, f, H_1, H_2, \delta_{i=1}h_2, \delta_{i=2}h_1.$

\item[(2)] For $i=1,2$, define $\psi_i\in \mathrm{Hom}_{\mathbb{F}}(\frak{g}, L_{\chi}(\lambda))$ by
$$\psi_i(H_i)=(0,0,0),\quad \psi_i(x)=0, $$
where $x=e, E, F, f, h_1, h_2, \delta_{i=1}H_2, \delta_{i=2}H_1.$

\item[(3)] Define $\varphi_3\in \mathrm{Hom}_{\mathbb{F}}(\frak{g}, L_{\chi}(\lambda))$ by
$$\varphi_3(f)=-(0,0,0),\quad\varphi_3(F)=(0,0,1),\quad \varphi_3(x)=0,$$
where $x=e, E, h_1, h_2, H_1, H_2.$

\item[(4)] Define $\varphi_4\in \mathrm{Hom}_{\mathbb{F}}(\frak{g}, L_{\chi}(\lambda))$ by
$$\varphi_4(e)=(p-2,0,0),\quad\varphi_4(E)=(p-3,1,0),\quad \varphi_4(x)=0,$$
where $x=f, F, h_1, h_2, H_1, H_2.$

\item[(5)] Define $\psi_3\in \mathrm{Hom}_{\mathbb{F}}(\frak{g}, L_{\chi}(\lambda))$ by
$$\psi_3(E)=-2(0,0,0),\quad\psi_3(F)=(2,0,0),$$
$$\psi_3(H_1)=(1,0,0),\quad\psi_3(H_2)=-(1,0,0),\quad \psi_3(x)=0,$$
where $x=e, f, h_1, h_2.$
\end{itemize}

In the following, we give a proof of Theorem \ref{Th2}.

\textit{The proof of Theorem \ref{Th2}}:
Claim that
$$
\mathrm{Der}(\frak{g},L_{\chi}(\lambda))_{(0)}=\left\{\begin{array}{ll}
\langle \frak{D}_{(\lambda_1+1,0,0)}\mid\frak{D}_{(\lambda_1-1,1,0)}\rangle, &\chi=0, \lambda=(\lambda_1,-\lambda_1)\in\mathbb{F}_p^2\;\mbox{with}\;\lambda_1\neq 0,\pm1\\
\langle \varphi_1,\varphi_2\mid \psi_1,\psi_2\rangle, &(\lambda,\chi)=(0,0)\\
\langle  \frak{D}_{(1,0,0)}\mid  \frak{D}_{(0,1,0)},\psi_3\rangle, &\lambda=(1,p-1),\chi=0\\
\langle \varphi_3,\varphi_4\mid 0\rangle, &\lambda=(p-1,1),\chi=0\\
\langle \frak{D}_{(\lambda_1,0,0)}\mid \frak{D}_{(\lambda_1-1,1,0)}\rangle, &\chi\;\mbox{is nilpotent}\\
0, & \mbox{otherwise}.
\end{array}\right.
$$

It is routine to show that $\varphi_i, \psi_j$ are weight-derivations under the corresponding condition
for $1\leq i\leq 4, 1\leq j\leq 3$.
It is a standard fact that the space
$\mathrm{Ider}(\frak{g},L_{\chi}(\lambda))_{(0)}=0$ in case $(\lambda,\chi)=(0,0)$ or $\lambda=(p-1,1),\chi=0$,
which implies that  the derivations $\varphi_i, \psi_j$ are not inner, where $i=1,2,3,4, j=1,2$.
In addition, $\mathrm{Ider}(\frak{g},L_{\chi}(\lambda))_{(0),\bar{1}}=\mathbb{F}\frak{D}_{(0,1,0)}$ in case $\lambda=(1,p-1),\chi=0$,
which implies that the derivation $\psi_3$ is also not inner.
It is true that the elements in the right sets of the above claim are linearly independent,
the proof of which is omitted.

In the following, we prove that weight-derivations must be in the right sets of the above claim.

For convenience, denote by $P_2$ the proposition that $1\leq\lambda_1\leq \frac{p-1}{2}$;
$P_3$ the one that $1\leq\lambda_1\leq \frac{p-1}{2}$ or $\lambda_1=p-1$;
$P_5$ the one that $2\leq\lambda_1\leq \frac{p-1}{2}$ or $\lambda_1=p-1$.

Let $\sigma\in \mathrm{Der}(\frak{g}, L_{\chi}(\lambda))_{(0),\bar{0}}$.
On one hand, by the definition of derivations and Lemma \ref{weightspacel}, we get the following equations:
\begin{align*}
\sigma([H_i, F])=&\sigma(F)^{(\lambda_1,1,0)}\left(\delta_{\chi=0}\delta_{P_2}
+\delta_{P_1}\right)H_i(\lambda_1,1,0)\\
&+\sigma(H_i)^{(\lambda_1-1,1,0)}\left(\delta_{\chi=0}\delta_{P_2}
+\delta_{P_1}\right)F(\lambda_1-1,1,0)\\
&+\delta_{(\chi,\lambda_1)=(0,p-1)}\sigma(F)^{(0,0,1)}H_i(0,0,1)\\
\stackrel{(\ref{001})-(\ref{a10l})}{=}&\sigma(F)^{(\lambda_1,1,0)}\left(\delta_{\chi=0}\delta_{P_2}
+\delta_{P_1}\delta_{\lambda_1\neq p-1}\right)(\lambda_1+1,0,0)\\
&+\sigma(F)^{(\lambda_1,1,0)}\delta_{P_1}\delta_{\lambda_1= p-1}\chi(f)^p(0,0,0)\\
&-\delta_{(\chi,\lambda_1)=(0,p-1)}\sigma(F)^{(0,0,1)}(0,0,0),
\end{align*}
\begin{align*}
\sigma([H_i, E])=&\sigma(E)^{(\lambda_1-2,1,0)}\left(\delta_{\chi=0}\delta_{P_5}
+\delta_{P_1}\right)H_i(\lambda_1-2,1,0)\\
&+\sigma(H_i)^{(\lambda_1-1,1,0)}\left(\delta_{\chi=0}\delta_{P_2}
+\delta_{P_1}\right)E(\lambda_1-1,1,0)\\
&+\delta_{(\chi,\lambda_1)=(0,1)}\sigma(E)^{(0,0,1)}H_i(0,0,1)\\
\stackrel{(\ref{001}),(\ref{a10l}),(\ref{El})}{=}&\sigma(E)^{(\lambda_1-2,1,0)}\left(\delta_{\chi=0}\delta_{P_5}
+\delta_{P_1}\delta_{\lambda_1\neq 1}\right)(\lambda_1-1,0,0)\\
&+\sigma(E)^{(\lambda_1-2,1,0)}\delta_{P_1}\delta_{\lambda_1= 1}\chi(f)^p(0,0,0)\\
&+\delta_{(\chi,\lambda_1)=(0,1)}\sigma(E)^{(0,0,1)}(0,0,0),
\end{align*}
\begin{align*}
\sigma([e,H_i])=&\sigma(H_i)^{(\lambda_1-1,1,0)}\left(\delta_{\chi=0}\delta_{P_2}
+\delta_{P_1}\right)e(\lambda_1-1,1,0)\\
&-\sigma(e)^{(\lambda_1-1,0,0)}\left(\delta_{\chi=0}\delta_{P_3}+\delta_{P_1}\right)H_i(\lambda_1-1,0,0)\\
\stackrel{(\ref{a00l}),(\ref{el})}{=}&\lambda_1(\lambda_1+1)\sigma(H_i)^{(\lambda_1-1,1,0)}\left(\delta_{\chi=0}\delta_{P_4}+\delta_{P_1}\right)(\lambda_1-2,1,0)\\
&+\delta_{(\chi,\lambda_1)=(0,1)}\left(2\sigma(H_i)^{(\lambda_1-1,1,0)}+(-1)^i\sigma(e)^{(\lambda_1-1,0,0)}\right)(0,0,1)\\
&+(-1)^i\sigma(e)^{(\lambda_1-1,0,0)}\left(\delta_{\chi=0}\delta_{P_5}+\delta_{P_1}\right)(\lambda_1-2,1,0),
\end{align*}
\begin{align*}
\frac{1}{2}\sigma([H_i,H_i])=&\sigma(H_i)^{(\lambda_1-1,1,0)}\left(\delta_{\chi=0}\delta_{P_2}
+\delta_{P_1}\right)H_i(\lambda_1-1,1,0)\\
\stackrel{(\ref{a10l})}{=}&\sigma(H_i)^{(\lambda_1-1,1,0)}\left(\delta_{\chi=0}\delta_{P_2}
+\delta_{P_1}\right)(\lambda_1,0,0).
\end{align*}
On the other hand, from the multiplication of $\frak{g}$, Lemmas \ref{weightspacel} and \ref{1649},
we have the following equations:
\begin{align*}
\sigma([H_i,H_i])&=2\sigma(h_i)=0,\\
 \sigma([H_i, F])&=\sigma(f)=\sigma(f)^{(\lambda_1+1,0,1)}\left(\delta_{\chi=0}\delta_{P_3}+\delta_{P_1}\right)(\lambda_1+1,0,1),\\
 \sigma([H_i, E])&=\sigma(e)=\sigma(e)^{(\lambda_1-1,0,0)}\left(\delta_{\chi=0}\delta_{P_3}+\delta_{P_1}\right)(\lambda_1-1,0,0),\\
\sigma([e,H_i])&=(-1)^i\sigma(E)=(-1)^i\left(\delta_{\chi=0}\delta_{P_5}+\delta_{P_1}\right)\sigma(E)^{(\lambda_1-2,1,0)}(\lambda_1-2,1,0)\\
&\quad\quad\quad\quad\quad\quad\quad+(-1)^i\delta_{(\chi,\lambda_1)=(0,1)}\sigma(E)^{(0,0,1)}(0,0,1).
\end{align*}
Then each even weight-derivation is in the right sets of the above claim.

Let $\tau\in \mathrm{Der}(\frak{g}, L_{\chi}(\lambda))_{(0),\bar{1}}$.
On one hand, by the definition of derivations and Lemma \ref{weightspacel}, we get the following equations:
\begin{align*}
\tau([e,H_i])=&\tau(e)^{(\lambda_1-2,1,0)}\left(\delta_{\chi=0}\delta_{P_5}
+\delta_{P_1}\right)H_i(\lambda_1-2,1,0)\\
&+\tau(H_i)^{(\lambda_1,0,,0)}\left(\delta_{\chi=0}\delta_{P_2}
+\delta_{P_1}\right)e(\lambda_1,0,0)\\
&+\delta_{(\chi,\lambda_1)=(0,1)}\tau(e)^{(0,0,1)}H_i(0,0,1)\\
\stackrel{(\ref{001}),(\ref{a10l}),(\ref{el})}{=}&\tau(H_i)^{(\lambda_1,0,0)}\lambda_1(\lambda_1+1)\left(\delta_{\chi=0}\delta_{P_2}
+\delta_{P_1}\right)(\lambda_1-1,0,0)\\
&+\tau(e)^{(\lambda_1-2,1,0)}\left(\delta_{\chi=0}\delta_{P_5}
+\delta_{P_1}\delta_{\lambda_1\neq 1}\right)(\lambda_1-1,0,0)\\
&+\tau(e)^{(\lambda_1-2,1,0)}\left(\delta_{(\chi,\lambda_1)=(0,1)}
+\delta_{P_1}\delta_{\lambda_1= 1}\right)(0,0,0),
\end{align*}
\begin{align*}
\tau([f,H_i])=&\tau(f)^{(\lambda_1,1,0)}\left(\delta_{\chi=0}\delta_{P_2}
+\delta_{P_1}\right)H_i(\lambda_1,1,0)\\
&+\tau(H_i)^{(\lambda_1,0,,0)}\left(\delta_{\chi=0}\delta_{P_2}
+\delta_{P_1}\right)f(\lambda_1,0,0)\\
&+\delta_{(\chi,\lambda_1)=(0,p-1)}\tau(f)^{(0,0,1)}H_i(0,0,1)\\
\stackrel{(\ref{001}),(\ref{a10l}),(\ref{fl})}{=}&\tau(H_i)^{(\lambda_1,0,0)}\left(\delta_{\chi=0}\delta_{P_2}
+\delta_{P_1}\delta_{\lambda_1\neq p-1}\right)(\lambda_1+1,0,0)\\
&+\tau(f)^{(\lambda_1,1,0)}\left(\delta_{\chi=0}\delta_{P_2}
+\delta_{P_1}\delta_{\lambda_1\neq p-1}\right)(\lambda_1+1,0,0)\\
&+\tau(f)^{(\lambda_1,1,0)}\left(\delta_{P_1}\delta_{\lambda_1=p-1}\chi(f)^p-\delta_{(\chi,\lambda_1)=(0,p-1)}\right)(0,0,0)\\
&+\tau(H_i)^{(\lambda_1,0,0)}\delta_{P_1}\delta_{\lambda_1= p-1}\chi(f)^p(0,0,0).
\end{align*}
On the other hand, from Lemma \ref{1649}, we have the following equations:
\begin{align*}
\tau([e,H_i])&=(-1)^i\tau(E)=(-1)^i\delta_{(\chi,\lambda_1)=(0,1)}\tau(E)^{(0,0,0)}(0,0,0),\\
\tau([f,H_i])&=(-1)^{i+1}\tau(F)=(-1)^{i+1}\delta_{(\chi,\lambda_1)=(0,1)}\tau(F)^{(2,0,0)}(2,0,0).
\end{align*}
Then each odd weight-derivation is in the right sets of the above claim.

It follows that the above claim is true. As a result Theorem \ref{Th2} holds.

\textbf{Statements and Declarations:}
All data generated or analysed during this study are included in this manuscript.

\end{document}